\newtheorem{theorem}{Theorem}[section]
\newtheorem{rem}[theorem]{Remark}
\newenvironment{remark}{\begin{rem} \em }{\em \end{rem}}
\newtheorem{lemma}[theorem]{Lemma}
\newtheorem{proposition}[theorem]{Proposition}
\newtheorem{ex}[theorem]{Example}
\newtheorem{definition}[theorem]{Definition}
\newcommand{\st} {\ensuremath{|\;}}
\definecolor{orange}{RGB}{250, 54, 0}
\definecolor{purple}{rgb}{0.75, 0.0, 1.0}
\author{Saliha Ferda Do\u{g}an\footnote{All authors contributed equally to this work. All are from Bilkent University, Department of Industrial Engineering, Ankara, 06800 Turkey} \thanks{ferda.dogan@bilkent.edu.tr}
\and \"Ozlem Karsu \thanks{ozlemkarsu@bilkent.edu.tr} 
\and Firdevs Ulus \thanks{firdevs@bilkent.com.tr}}
\title{An exact algorithm for biobjective integer programming problems}
\date{\today}
\begin{document}
	%%%%%%%%%%%%%%%%

	\maketitle

	\begin{abstract} 
		We propose an exact algorithm for solving biobjective integer programming problems, which arise in various applications of operations research. The algorithm is based on solving Pascoletti-Serafini scalarizations to search specified regions (boxes) in the objective space and returns the set of nondominated points. We implement the algorithm with different strategies, where the choices of the scalarization model parameters and splitting rule differ. We then derive bounds on the number of scalarization models solved; and demonstrate the performances of the variants through computational experiments both as exact algorithms and as solution approaches under time restriction. The experiments demonstrate that different strategies have advantages in different aspects: while some are quicker in finding the whole set of nondominated solutions, others return good-quality solutions in terms of representativeness when run under time restriction. We also compare the proposed approach with existing algorithms. The results of our experiments show the satisfactory behaviour of our algorithm, especially when run under time limit, as it achieves {better} coverage of the whole frontier with {a} smaller number of solutions compared to the existing algorithms.
		\medskip
		
		\noindent
		{\bf Keywords:} Biobjective integer programming, Pascoletti-Serafini scalarization, Algorithms.
		
		\medskip
		%\noindent
		%{\bf JEL Classification:} 
	\end{abstract}

	% Text of your paper here
	\section{Introduction} \label{sect: intro}
	
	In many operations research applications such as scheduling, task assignment and transportation, the underlying problem can be modeled as an integer programming problem. Moreover, a vast amount of these problems require two (or more) criteria to be considered, leading to biobjective (multiobjective) integer programming problems. 
	
	In this study, we focus on biobjective integer programming problems (BOIP) and propose an algorithm that returns the whole set of nondominated points of these problems.  There are a number of solution approaches that have been designed for BOIP in the literature, most of which explore the objective (criterion) space by repetitively solving single objective optimization problems related to the BOIP, called scalarization problems (or simply, scalarizations). A scalarization is formulated by means of a real-valued scalarizing function of the objective functions of the BOIP, auxiliary scalar or vector variables and/or parameters (\cite{Ehrgott}). 
	
	There are several scalarizations proposed in the literature. The widely-used ones are the weighted sum scalarization (\cite{Zadeh,Koski1988}), the $\epsilon$-constraint scalarization (\cite{Haimes}) and the (weighted) Chebyshev scalarization (\cite{Bowman,Steuer}). Most of the recent algorithms in the literature solve these scalarizations or their modifications repetitively to find the set of nondominated solutions. Commonly used algorithms are the perpendicular search and the $\epsilon$-constraint algorithm, which are based on weighted sum scalarization and $\epsilon$-constraint scalarization, respectively (\cite{Haimes,Chankong2,Chalmet,Ladesma,hamacher2007finding,leitner2015computational}). Examples of algorithms using weighted Chebychev scalarizations are proposed by \cite{Ralphs} and \cite{Sayin}, where a modified version of the scalarization is used. There are also two-phase algorithms, which generate the nondominated points at the extreme points of the convex hull (called ``extreme supported" points) in the first phase and find the unsupported nondominated points by exploring the triangles defined by two consecutive supported nondominated points in the second phase (\cite{Ulungu,Przybylski}). Recently, the balanced box algorithm is proposed by \cite{Boland}, and a two-stage algorithm which combines the balanced box and $\epsilon$-constraint algorithms is discussed by \cite{Rui}. Similarly, \cite{leitner2016ilp} suggests a hybrid approach combining  $\epsilon$-constraint method and binary search in the objective space, which was previously discussed in \cite{Ladesma}.
	
	We propose an exact solution algorithm that finds the whole set of nondominated solutions to BOIPs. The algorithm is based on the Pascoletti-Serafini scalarization (\cite{Pascoletti1984}), which depends on two parameters, a reference point and a direction vector. It has been employed in many vector optimization algorithms, mainly designed for continuous (linear / nonlinear convex / non-convex) problems, see for example~\cite{benson,lohne,cvop,nonconvex}. It is known that many scalarization models, including the weighted sum and the $\epsilon$-constraint scalarizations, can be seen as a special case of the Pascoletti-Serafini scalarization (\cite{eichfelder}). Moreover, using this scalarization, it is possible to find the set of all nondominated points even if the corresponding multiobjective optimization problem is not convex (which is not possible, for instance, with the weighted sum scalarization) or even if it has an ordering cone different from the nonnegative cone (which is not the case, for instance, with the $\epsilon$-constraint scalarization). The main motivation of using the Pascoletti-Serafini scalarization in this work is to use its flexible structure in order to generate a `nice' representation of the nondominated set, quickly. Note that by choosing the reference point and the direction vector, one has control on the positions of newly found nondominated points, which has the potential to reach this goal. As we will elaborate later on, in {a} weighted sum scalarization, the weight vector can be used to control the positions of newly found nondominated points in a similar way. However, since such approaches can only generate unsupported points by iteratively making them locally supported (supported within a box); it may not be possible to reach such points in the early iterations, which may reduce representativeness under time limited implementations.
	
	We adapt the Pascoletti-Serafini scalarization model for biobjective integer programming settings and discuss different implementation strategies for the algorithm, which we call variants for short. We compare these variants with respect to the number of (mixed) integer programming problems (IPs) solved  and solution time. We also test the performances of the variants under time limit and report on the representativeness of the obtained solution sets using the (scaled) coverage error (\cite{Sayin2000,Ceyhan}). We also perform similar comparative tests with existing algorithms from the literature. 
	
	The main contributions of this work can be summarized as follows: %(i) using a generic-box exploration framework, 
	{(i) demonstrating the advantages and drawbacks of various implementations of  a generic box exploration framework based on different parameter choices and splitting rules,} (ii) providing an intuitive geometric interpretation on how to set the direction in Pascoletti-Serafini scalarization so as to obtain good results under time limit, and (iii) performing an extensive comparative analysis.

	The structure of the paper is as follows. In Section~\ref{sect: prelim} we give the preliminaries and the problem definition. In Section~\ref{sect: alg} we explain the algorithm and provide bounds on the number of (mixed) integer programming problems solved. We test the performance of the algorithm and report the results of our experiments in Section \ref{sec:CE}. We conclude our discussion in Section \ref{sec:Conc}.

	\section{Preliminaries and problem definition} \label{sect: prelim}
	
	A general biobjective integer programming problem is formulated as
	\begin{equation} \tag{$P$}
	\text{``min"}\{\,z(x)=(z_1(x),  z_2(x))^T \st x\in\mathcal{X}\subseteq\mathbb{Z}^n \,\},
	\label{P}
	\end{equation}
	where $z_i(\cdot)$, $i=1,2$ are integer-valued objective functions. The set $\mathcal{X}$ represents the feasible set in the decision space, and the set $\mathcal{Z}=\{z(x) \st x\in \mathcal{X}\}$ represents the feasible set in the objective space. Note that the quotation marks are used for min as there is no complete order in the two dimensional vector space.
	
	Throughout the paper we will use the following notation for vector inequalities:
	\begin{align*}
	z(x^{'})\leq z(x) & \iff z_i(x^{'})\leq z_i(x) \text{~for~} i \in \{1,2\}; \\
	z(x^{'})\lneq z(x) & \iff z(x^{'})\leq z(x) \text{~and~} z(x^{'})\neq z(x);\\
	z(x^{'})< z(x) & \iff z_i(x^{'})< z_i(x) \text{~for~} i \in \{1,2\}.
	\end{align*}\begin{definition}
		$z(x^{'}) \in\mathcal{Z}$ \textit{dominates (strictly dominates)} $z(x)\in\mathcal{Z}$ if $z(x^{'})\lneq z(x)$ ($z(x^{'})< z(x)$). If there exists no $x^{'}\in\mathcal{X}$ such that $z(x^{'})$ dominates (strictly dominates) $z(x)$, then $z(x)$ is \textit{nondominated (weakly nondominated)} and $x$ is \textit{efficient (weakly efficient)}. 
		
	\end{definition}
	
	The set of all nondominated vectors is denoted by $\mathcal{N}$. The ideal and nadir points of problem \eqref{P}, respectively, are as follows:
	\begin{align*} 
	s^0=\big(\min_{x\in\mathcal{X}} z_1(x), \min_{x\in\mathcal{X}} z_2(x) \big)^T, \:\:
	u^0=\big(\max_{z\in \mathcal{N}} z_1,  \max_{z\in \mathcal{N}} z_2 \big)^T.
	\end{align*}
	
	A lexicographic optimization problem with two objective functions is given by
	\begin{equation} \label{eq:lexmin}
	\text{lexmin}\{\,z_i(x), z_j(x) \st  x\in\mathcal{X}\,\},
	\end{equation}
	where $i,j \in \{1,2\}$ and $i\neq j$. Solving~\eqref{eq:lexmin} means solving the following two (single objective) optimization problems: First, $\text{min}\{\, z_i(x)\st x\in\mathcal{X} \,\}$, and given an optimal solution $x^{'}$ of the first model, $ \text{min}\{\, z_j(x)\st x\in\mathcal{X},\ z_i(x)=z_i(x^{'})\,\}$. Solving a lexicographic optimization yields an efficient solution.
	
	%\Fird{Pascoletti Serafini modelinin orjinalini ve onunla ilgili sonuçları da buraya koyalım mı? Aşağıda direk bahsediyoruz, bilmeyen için kafa karıştırıcı olabilir. Bir  de modelin neden weak efficient çözüm bulduğu açık olmuş olur.}
	
	In general, scalarization models are solved in order to find (weakly) efficient solutions. Throughout, the Pascoletti-Serafini scalarization is employed with two parameters: a reference point $s \in \mathbb{R}^2$ and a direction $d\in \mathbb{R}_+^{2}\setminus\{0\}$. The model is as follows:
	\begin{equation} 
	\label{eq:pascoletti serafini}
	\min \big\{\, \alpha ~|~ x\in\mathcal{X}, \ z(x) \leq s+\alpha d, \:\alpha \in \mathbb{R} \big\}.
	\end{equation}
	
	\begin{lemma}[\cite{Pascoletti1984}]\label{lem:PS}
		If $(x^*, \alpha^*)$ is an optimal solution of \eqref{eq:pascoletti serafini} for some $ s\in \mathbb{R}^2$ and $d\in \mathbb{R}_+^{2}\setminus\{0\}$, then $x^*$ is weakly efficient.
	\end{lemma}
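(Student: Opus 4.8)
The plan is to argue by contradiction. Suppose $(x^*,\alpha^*)$ solves \eqref{eq:pascoletti serafini} but $x^*$ is not weakly efficient. Then by definition there exists $x'\in\mathcal{X}$ with $z(x') < z(x^*)$, i.e. $z_i(x') < z_i(x^*)$ for both $i\in\{1,2\}$. The idea is to use this strict improvement to build a feasible point of \eqref{eq:pascoletti serafini} with objective value strictly smaller than $\alpha^*$, contradicting optimality.

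First I would record what feasibility of $(x^*,\alpha^*)$ gives us: $z(x^*) \leq s + \alpha^* d$, that is, $z_i(x^*) \leq s_i + \alpha^* d_i$ for $i=1,2$. Combining this with $z_i(x') < z_i(x^*)$ yields $z_i(x') < s_i + \alpha^* d_i$ for $i=1,2$, so each of the two inequalities $z_i(x') \leq s_i + \alpha d_i$ holds strictly at $\alpha = \alpha^*$. The next step is to choose $\alpha' < \alpha^*$ so that $(x',\alpha')$ is still feasible. Since $d \in \mathbb{R}_+^2\setminus\{0\}$, at least one component of $d$ is strictly positive, but some component may be zero, so one must be a little careful: for any index $i$ with $d_i = 0$ the constraint reads $z_i(x') \leq s_i$, which already holds strictly and is unaffected by $\alpha$; for indices with $d_i>0$ one can decrease $\alpha$ slightly from $\alpha^*$ and still maintain $z_i(x') \leq s_i + \alpha d_i$ by the strictness of the inequality. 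Concretely, one can take $\alpha' = \max_{i: d_i>0} \frac{z_i(x') - s_i}{d_i}$ (or any value strictly between this maximum and $\alpha^*$); since each $\frac{z_i(x')-s_i}{d_i} < \alpha^*$, we get $\alpha' < \alpha^*$, and by construction $z_i(x') \leq s_i + \alpha' d_i$ for every $i$ with $d_i>0$, while for $d_i=0$ the inequality holds regardless.

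Then $(x',\alpha')$ is feasible for \eqref{eq:pascoletti serafini} with $\alpha' < \alpha^*$, contradicting the optimality of $(x^*,\alpha^*)$. Hence no such $x'$ exists and $x^*$ is weakly efficient, completing the proof.

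The only mildly delicate point — hardly an obstacle — is handling the possibility that $d$ has a zero component, which is why $d$ ranges over $\mathbb{R}_+^2\setminus\{0\}$ rather than $\mathbb{R}_{>0}^2$; splitting the two coordinates according to whether $d_i$ vanishes, as above, takes care of it. Everything else is an immediate translation of the definitions of domination and of feasibility/optimality in the scalarized problem. (Note also that integrality of $\mathcal{X}$ plays no role here; the argument is the standard one for Pascoletti–Serafini scalarizations in general vector optimization.)
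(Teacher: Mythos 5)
Your proof is correct. The paper does not prove this lemma at all --- it is imported verbatim from Pascoletti and Serafini (1984) --- so there is no in-paper argument to diverge from; your contradiction argument (strict dominance of $z(x^*)$ by $z(x')$ lets you decrease $\alpha$ below $\alpha^*$ while keeping $x'$ feasible, with the coordinates where $d_i=0$ handled separately) is the standard proof and is in the same spirit as the short contradiction proof the paper gives for the companion Lemma~\ref{lem:Lem2}.
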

	
	\begin{lemma}\label{lem:Lem2}If $(x^*,\alpha^*)$ is an optimal solution of \eqref{eq:pascoletti serafini} for some $ s\in \mathbb{R}^2$ and $d\in \mathbb{R}_+^{2}\setminus\{0\}$ then $y^*=s+\alpha^*d$ and $ z(x^*) $ are equal in at least one component. 
	\end{lemma}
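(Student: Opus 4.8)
The plan is to argue by contradiction, exploiting the minimality of $\alpha^*$. Suppose $z(x^*)$ and $y^* = s + \alpha^* d$ differ in \emph{both} components. By feasibility of $(x^*, \alpha^*)$ we have $z(x^*) \leq s + \alpha^* d$, so strict inequality holds in both components: $z_1(x^*) < s_1 + \alpha^* d_1$ and $z_2(x^*) < s_2 + \alpha^* d_2$. The idea is then to decrease $\alpha$ slightly while keeping $x = x^*$ fixed, and show that the pair $(x^*, \alpha^* - \eps)$ remains feasible for small $\eps > 0$, contradicting optimality.

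First I would make the feasibility check explicit. We need $z_i(x^*) \leq s_i + (\alpha^* - \eps) d_i = (s_i + \alpha^* d_i) - \eps d_i$ for $i = 1, 2$. Set $\delta_i := (s_i + \alpha^* d_i) - z_i(x^*) > 0$ by the assumed strict inequality. Since $d \in \mathbb{R}_+^2 \setminus \{0\}$, each $d_i \geq 0$; choosing $\eps \leq \min\{\delta_1/d_i : d_i > 0\}$ (and any positive $\eps$ if some $d_i = 0$, since then the constraint in that component is unaffected) guarantees $\eps d_i \leq \delta_i$, hence $z_i(x^*) \leq s_i + (\alpha^* - \eps)d_i$ for both $i$. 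Thus $(x^*, \alpha^* - \eps)$ is feasible for \eqref{eq:pascoletti serafini} with objective value $\alpha^* - \eps < \alpha^*$, contradicting optimality of $(x^*, \alpha^*)$. Therefore $z(x^*)$ and $y^*$ must agree in at least one component.

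The only subtlety — and the one place to be slightly careful — is the degenerate case where $d$ has a zero component: the corresponding constraint $z_i(x^*) \leq s_i + \alpha d_i = s_i$ does not depend on $\alpha$ at all, so perturbing $\alpha$ never violates it, and the argument above still goes through (we only need to shrink $\eps$ against the components with $d_i > 0$, of which there is at least one since $d \neq 0$). I do not expect any real obstacle here; the statement is essentially an immediate consequence of optimality, and the proof is a short perturbation argument. One could alternatively phrase it without contradiction by noting directly that $\alpha^* = \max_i \big(z_i(x^*) - s_i\big)/d_i$ when all $d_i > 0$, but the contradiction form handles the zero-component case more cleanly in one stroke.
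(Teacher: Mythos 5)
Your proof is correct and follows essentially the same route as the paper: assume both components differ strictly, then decrease $\alpha$ to contradict optimality. The paper states this in one line ("there exists $\bar{\alpha}<\alpha^*$ such that $z(x^*)\leq s+\bar{\alpha}d$"), while you make the choice of the perturbation $\eps$ and the handling of zero components of $d$ explicit, which is a harmless elaboration of the same argument.
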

	
	\begin{proof}
		Assume to the contrary that $ z(x^*) $ and $y^*$ are different in both components. That is, $z(x^*)<y^*$. Hence there exists $ \bar{\alpha}<\alpha^* $ such that $z(x^*)\leq s+\bar{\alpha}d$, which contradicts the optimality of $(x^*,\alpha^*)$.	
	\end{proof}
	
	When a subset $\bar{\mathcal {N}}$ of $\mathcal{N}$ is found through an algorithm or a procedure, in order to measure how well $\bar{\mathcal{N}}$ represents the set of all nondominated points ($\mathcal{N}$), it is possible to use the `coverage error' that is introduced by \cite{Sayin2000}.  Similar measures are used in the literature to measure representativeness, for example the coverage gap measure used recently in \cite{Ceyhan}. Here, we provide the definition of coverage error for the special case where Chebyshev ($L^{\infty}$) metric  is used. Note that this metric provides the maximum possible error that can be observed in any of the objective function values. As mentioned by \cite{Sayin2000}, other $L^{p}$ metrics would tend to add up coordinate-wise distances, which would not be appropriate in most cases where the objective functions are of a different nature, such as cost and quality. In that sense, using the Chebyshev metric is the safest choice.
	
	We also introduce the scaled version as in~\cite{Ceyhan}. 
	%for general multiobjective optimization problems. in the name of $\epsilon$-indicator in~\cite{zitzler}, and employed and renamed in~\cite{Ceyhan}.} 
	
	\begin{definition}\label{defn_gap}
		Let $\bar{\mathcal {N}} \subseteq \mathcal{N}$ be a representative subset.	The \emph{coverage error of $\bar{\mathcal{N}}$ with respect to $n\in\mathcal{N}$} is $$CE({\bar{\mathcal{N}}},{n})=\min_{\bar{n}\in\bar{\mathcal{N}}}\left(\max\{|n_1-\bar{n}_1|,|n_2-\bar{n}_2|\}\right),$$
		the \emph{coverage error} of $\bar{\mathcal{N}}$ is	$$CE(\bar{\mathcal{N}})=\max_{n\in\mathcal{N}}CE({\bar{\mathcal{N}}},{n})$$
		and the \emph{scaled coverage error} of $\bar{\mathcal{N}}$ is $$SCE({\bar{\mathcal{N}}})=\frac{CE(\bar{\mathcal{N}})}{\max\{u^0_1-s^0_1,u^0_2-s^0_2\}},$$ where $u^0$ and $s^0$ are the nadir and the ideal points, respectively.
	\end{definition}
	
	In addition to the coverage error, we also use `hypervolume gap' as another measure of representativeness, see e.g. \cite{zitzler, Boland, leitner2016ilp}. We employ the metric as used in \cite{Boland}. In particular, for $\bar{\mathcal{N}}\subseteq \mathcal{N}$, the hypervolume $H(\bar{\mathcal{N}})$ of $\bar{\mathcal{N}}$ is computed as the area of the region $$\bigcup_{n\in\bar{\mathcal{N}}} \{z\in \mathbb{R}^2 \st n\leq z\leq u^0\},$$ where $u^0$ is the nadir point of the problem. The hypervolume of the true set $H(\mathcal{N})$ is higher than that of a representative subset. Hypervolume gap is the difference $H(\mathcal{N})-H(\bar{\mathcal{N}})$ and a subset with less hypervolume gap is considered as having better representativeness. Similar to the scaled coverage error, we consider a scaled version of this metric.
	
	\begin{definition} \label{defn:HVG}
		Let $\bar{\mathcal {N}} \subseteq \mathcal{N}$ be a representative subset.	The \emph{scaled hypervolume gap (SHG) of $\bar{\mathcal{N}}$} is $$SHG({\bar{\mathcal{N}}})=\frac{H({\mathcal{N}})-H(\bar{\mathcal{N}})}{H({\mathcal{N}})}.$$	
	\end{definition}
	
	\section{The algorithm}\label{sect: alg}
	
	Throughout the algorithm the search regions in the objective space are referred to as boxes. A box is defined by three points in the criterion space, namely the starting point $s$, the nondominated point $t$ which defines the first component of the starting point and the nondominated point $p$ which defines the second component of the starting point, and denoted as follows $$b(s, p, t) = \{\,y \in  \mathbb{R}^2 \mid s_1\leq y_1 \leq p_1, \ s_2\leq y_2 \leq t_2 \, \}.$$
	
	Note that  it is possible to define the box using only $ p $ and $ t $. However, we keep the starting point $ s $ in the definition as it is used in the scalarization models.
	
	The general idea of the algorithm can be described as follows. At the beginning, two sets namely $ \mathcal{N} $ and $\mathcal{B}$, are defined to denote the set of nondominated points and boxes to be investigated, respectively. For initialization, two corner points of the nondominated set are found by solving $ \text{lexmin}\{\,z_1(x),z_2(x) \st x\in\mathcal{X}\,\} $ and $ \text{lexmin}\{\,z_2(x),z_1(x) \st x\in\mathcal{X}\,\} $. Let the optimal objective function vectors of these models be $t^0$, $p^0$, respectively. We initialize $\mathcal{N}$ as $\{t^0, p^0\} $ and $\mathcal{B}$ as $\{b(s^0, p^0, t^0)\}  $, where  $s^0$ is the ideal point. Clearly, the initial box includes all nondominated points. See Figure \ref{fig:IB} for the illustration of the initial region. 
	
	\begin{figure}[h] 	
		\centering
		\includegraphics[scale=0.45]{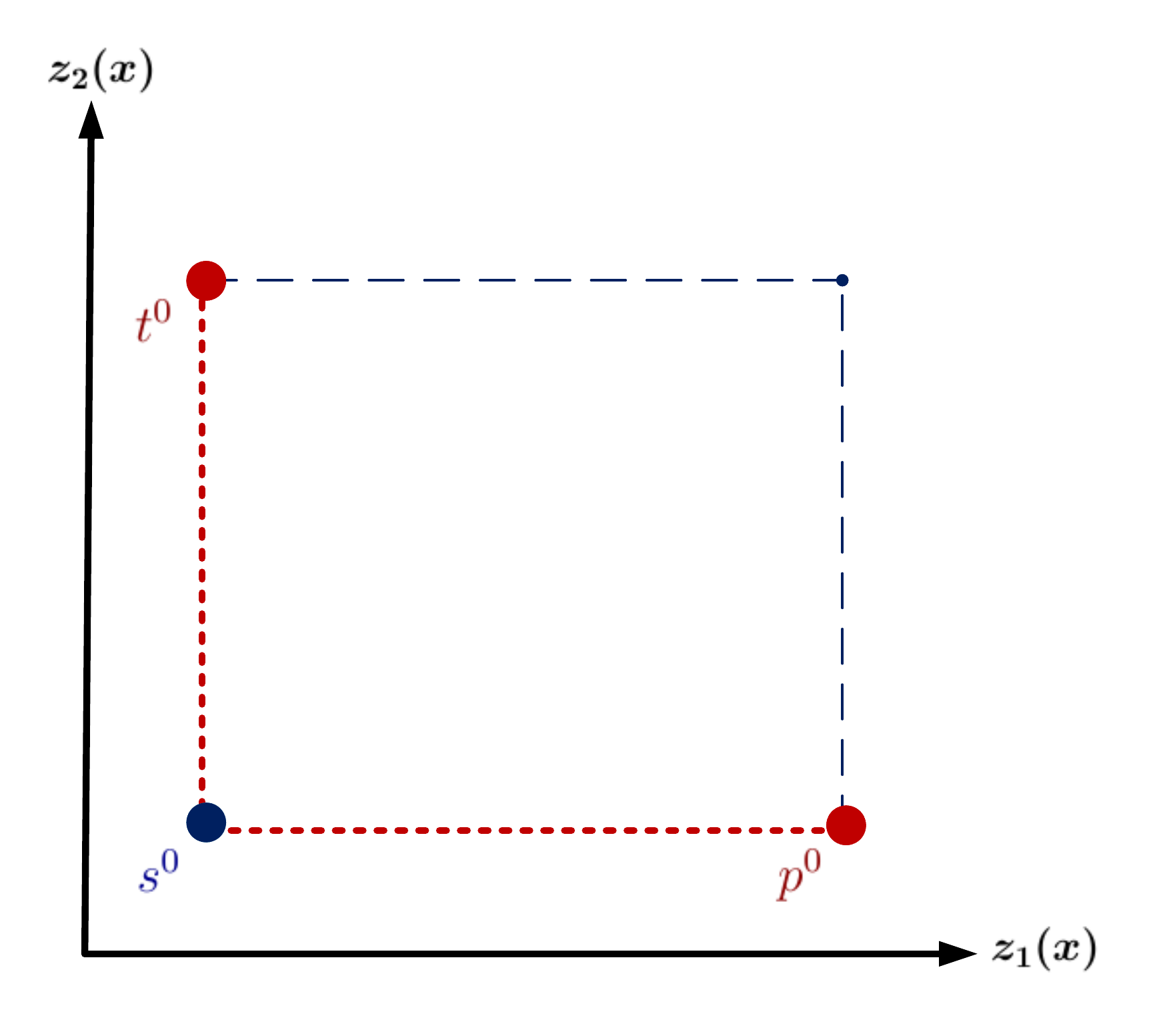}
		\caption{Initial box}
		\label{fig:IB}
	\end{figure}
	
	At each iteration, the algorithm searches one box from set $\mathcal{B}$ to find a (weakly) nondominated point by solving a Pascoletti-Serafini scalarization. In order to ensure finding a nondominated point, an (two) extra model(s) is (are) solved and the obtained nondominated point(s) is (are) added to $\mathcal{N}$. Then, the explored box is discarded and if at least one new nondominated point is found, two new boxes are added to $\mathcal{B}$ to be searched in the next iterations. The algorithm continues until there are no boxes to explore. The pseudocode of the algorithm is given by Algorithm~\ref{Alg}.
	
	At an arbitrary iteration, a box $b=b(s^b, p^b, t^b)$ from set $\mathcal{B}$ is selected and the following optimization problem is solved to search the box
	\begin{equation} \label{eq:Rbd}
	\tag{$R(b,d)$}
	\text{min}\{\, \alpha \st x\in\mathcal{X}, \;\alpha\in\mathbb{R}, \; z(x) \leq s^{b}+\alpha d, \; z_1(x) \leq \ p_1^{b} - \epsilon, \; z_2(x) \leq \ t_2^{b}- \epsilon\,\},
	\end{equation}
	where $d\geq 0$ is a direction vector in $ \mathbb{R}^2$ and $0<\epsilon<1$. This is a slightly modified Pascoletti-Serafini model. The last two constraints are added to prevent finding the (possibly weakly) nondominated points $p^b$ and $t^b$, which are already found in the previous iterations. If this problem is infeasible, then there is no nondominated point other than $p^b$ and $t^b$ in the box. Otherwise, let the optimal solution of~\eqref{eq:Rbd} be $(\alpha^b, x^b)$ and the corresponding (weakly) nondominated point be $n^{b}=z(x^b)$, see Lemma~\ref{lem:PS}. Note that $\alpha^b$ is the step size and defines the point $y^b =s^{b}+\alpha^b d$, which has at least one common component with $n^b$, see Lemma \ref{lem:Lem2}. Since the scalarization only guarantees that $n^b$ is weakly nondominated, the following problem(s) is (are) solved to ensure that a nondominated point is found. If the first components of $y^{b}$ and $n^b$ are equal ($n_1^{b}=y_1^{b}$) then,
	
	\begin{equation} \label{eq:P1}
	\tag{$P_1(x^b)$}
	\text{min}\{\, z_2(x)\st x\in\mathcal{X},\ z_1(x)=z_1(x^{b})\,\}
	\end{equation}
	is solved and, if the second components are equal ($n_2^{b}=y_2^{b}$) then,
	\begin{equation} \label{eq:P2}
	\tag{$P_2(x^b)$}
	\text{min}\{\, z_1(x)\st x\in\mathcal{X},\ z_2(x)=z_2(x^{b})\,\}
	\end{equation}
	is solved. Notice that it is possible to have $y^b=n^b$ and in this case, both problems are solved. Let the solutions of~\eqref{eq:P1} and~\eqref{eq:P2} be $x^{1}$ and $x^{2}$, respectively and $n^1=z(x^1)$ and $n^2=z(x^2)$ be the corresponding points in the criterion space. If only~\eqref{eq:P1} is solved, then $n^2$ is set to $n^b$ and symmetrically, if only~\eqref{eq:P2} is solved $n^1$ is set to $n^b$ (to be used in partitioning) (see lines 8-17 in Algorithm 1). Then, $\mathcal{N}$ is updated accordingly (lines 18-23).
	
	If both~\eqref{eq:P1} and~\eqref{eq:P2} are solved, it is possible to find two nondominated points $n^1$ and $n^2$ in the same iteration. In this case, both $n^1$ and $n^2$ are added to $\mathcal{N}$. See Figures \ref{fig:n1only2}-\ref{fig:n1n2} for illustrations of these cases. \par
	
	\begin{figure}[!] 
		\centering	
		\begin{minipage}[b]{0.49\textwidth}
			\includegraphics[scale=0.5]{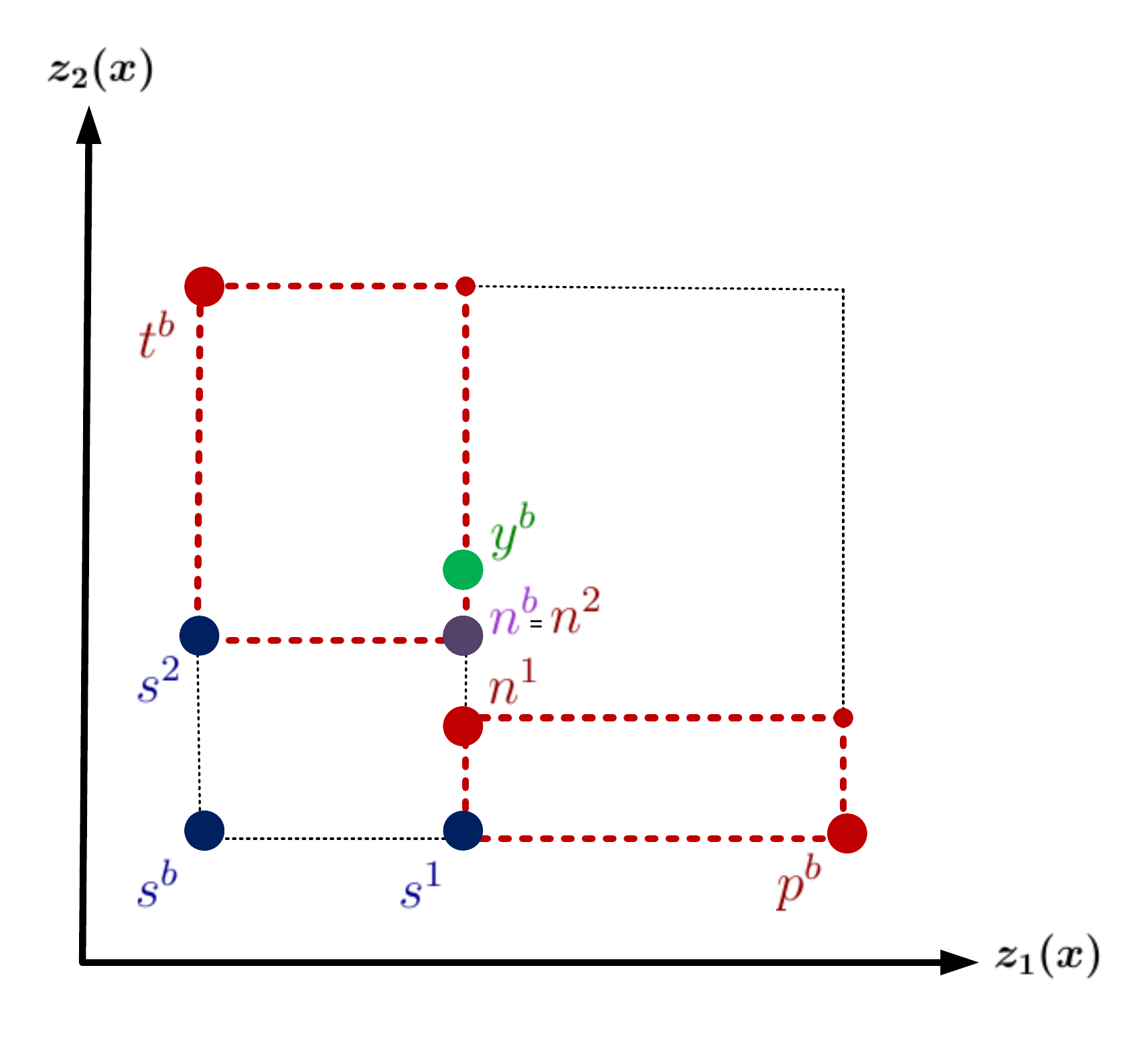}
			\caption{\eqref{eq:P1} is solved, $n^1$ is found as a nondominated point and $n^2$ is set to $n^b$.}
			\label{fig:n1only2}
		\end{minipage}
		\hfill
		\begin{minipage}[b]{0.49\textwidth}
			\includegraphics[scale=0.5]{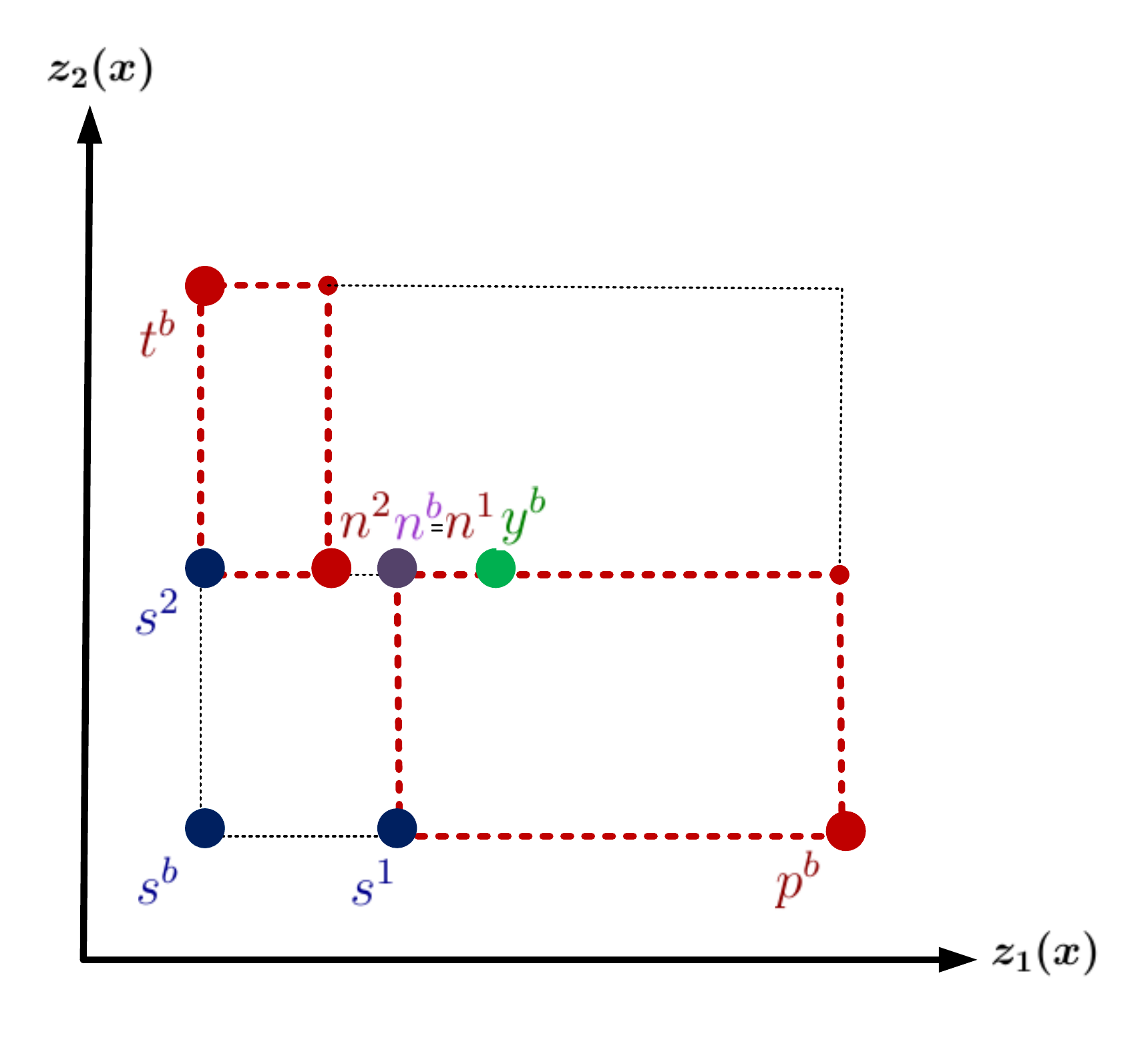}
			\caption{\eqref{eq:P2} is solved, $n^2$ is found as nondominated point and $n^1$ is set to $n^b$.}
			\label{fig:n2only2}
		\end{minipage}
		\vfill
		\begin{minipage}[b]{0.49\textwidth}
			\includegraphics[scale=0.5]{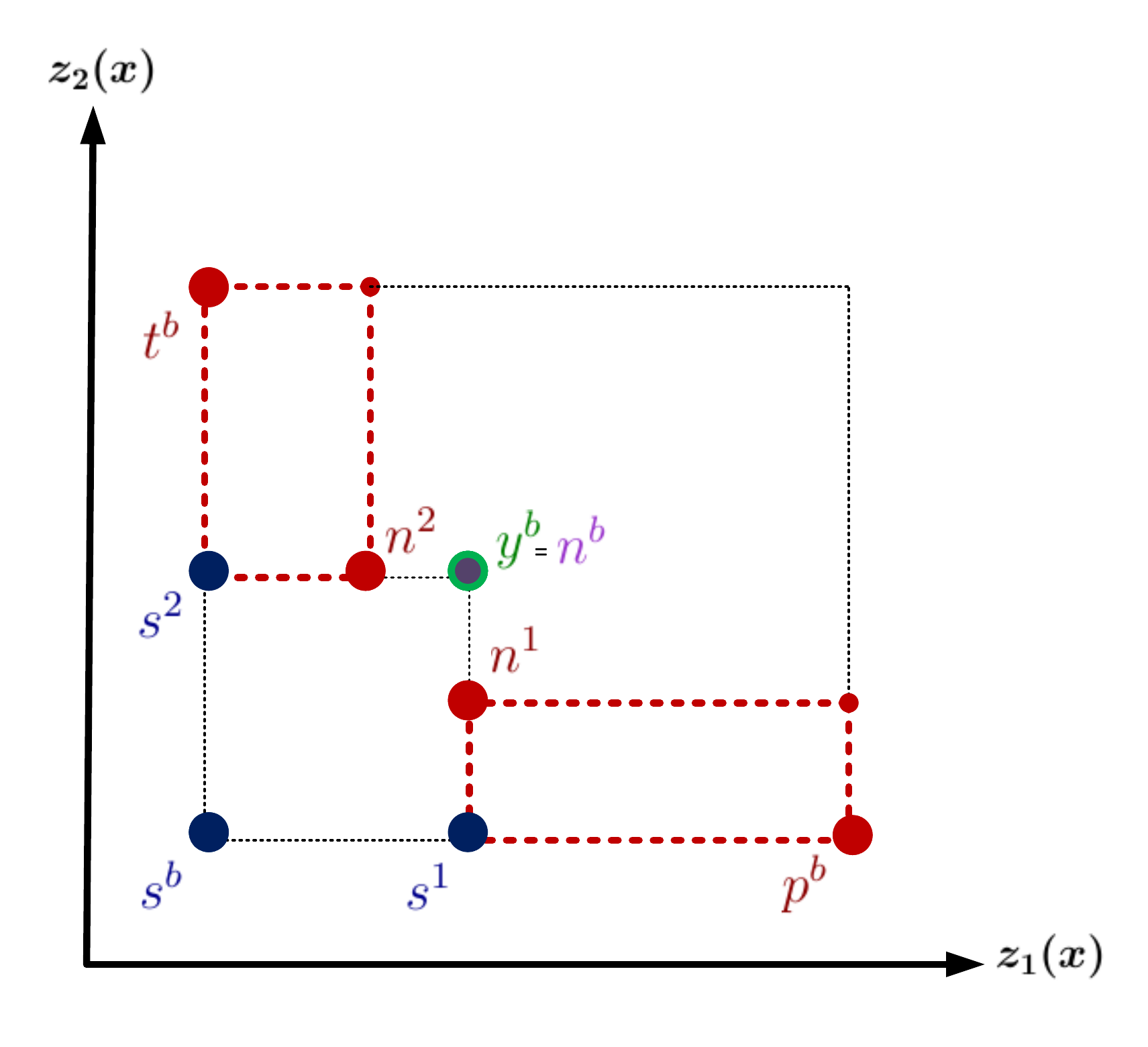}
			\caption{\eqref{eq:P1} and~\eqref{eq:P2} are solved, $n^1$ and $n^2$ are found as nondominated points.}
			\label{fig:n1n2}
		\end{minipage}
	\end{figure}
	
	%\begin{figure}[!]
	%	\label{fig:n1n2}
	%	\centering
	%	\includegraphics[scale=0.5]{box3FixedNp}
	%	\caption{\eqref{eq:P1} and~\eqref{eq:P2} are solved, $n^1$ and $n^2$ are found as nondominated points}
	%\end{figure}
	
	%\begin{figure}[!]	
	%	\centering	
	%%	\begin{minipage}[b]{0.4\textwidth}
	%		\includegraphics[scale=0.33]{P1.pdf}
	%	%	\caption{Only $P_1(x^b)$ is solved}
	%%	\end{minipage}
	%	%\hfill
	%%	\begin{minipage}[b]{0.45\textwidth}
	%		\includegraphics[scale=0.33]{P2.pdf}
	%	%	\caption{Only $P_2(x^b)$ is solved}
	%%	\end{minipage}
	%	%\vfill
	%%	\centering
	%%	\begin{minipage}[b]{0.35\textwidth}
	%		\includegraphics[scale=0.33]{P1and2.pdf}
	%	%	\caption{Both $P_1(x^b)$ and $P_2(x^b)$ are solved}
	%%	\end{minipage}
	%\caption{Possible cases}
	%\label{fig:cases}
	%\end{figure}
	
	For any (weakly) nondominated point $ n $, the dominated region $ \{y \in \mathbb{R}^2 ~|~ n \leq y\} $ and the dominating region $ \{y \in \mathbb{R}^2~|~ y \leq  n\} $  ($ \{y \in \mathbb{R}^2~|~ y < n\} $) can not contain any nondominated points; hence the current box $b(s^b, p^b, t^b)$ is split into two boxes using $n^1$ and $n^2$. More specifically, the first box is formed as $b(s^1, p^b, n^1)$, where $s^1=(n_1^1, p_2^b)^T$ and the second box is formed as $b(s^2, n^2, t^b)$, where  $s^2=(t_1^b, n_2^2)^T$. See Figures~\ref{fig:n1only2}-\ref{fig:n1n2} for the illustrations of newly formed boxes for different cases. \par
	
	Finally, the algorithm avoids searching regions which cannot have any new nondominated points, by taking the advantage of the integrality of the problem $(P)$ and the structure of a box. The boxes which do not satisfy $p_1^b-s_1^b>1$ and $t_2^b-s_2^b>1$ are eliminated since they can not include any nondominated points other than $p^b$ and $t^b$. After new boxes are defined and their sizes are checked to make sure that they can include nondominated points, they are added to set $\mathcal{B}$ to be searched in the next iterations. Then, the searched box $b(s^b, p^b, t^b)$ is removed from the set $\mathcal{B}$ (lines 24-30). The algorithm repeats the steps which are introduced above until there is no box in $\mathcal{B}$. 
	Note that the child boxes obtained by splitting a newly explored box are added to the end of the list of boxes, $\mathcal{B}$, (see lines 26-29) and the boxes are explored starting from the first box in  $\mathcal{B}$ (see line 4). Due to this structure, the algorithm always explores boxes obtained in previous iterations, before exploring the newly formed child boxes. This is to encourage exploration of the relatively larger boxes first. We, however, note that there might be exceptions: there might be cases, where a newly generated box is larger than a box generated in a previous iteration and hence explored later. We elaborate on this issue in Section \ref{subsec:Ext}.
	
	\begin{remark}\label{rem:weaknondom}
		Note that it is possible to have either $n^1$ (see Figure~\ref{fig:n2only2}) or $n^2$ (see Figure~\ref{fig:n1only2}) being not nondominated but only weakly nondominated. Hence for a box $b(s^b,p^b,t^b)$ considered through the algorithm, it is possible that $p^b,t^b$ are only weakly nondominated points. However, by the structure of forming the new boxes, even if $p^b$ ($t^b$) is not nondominated, there exists a nondominated point $p$ ($t$) such that $p^b_1=p_1$ ($t^b_2=t_2$). Clearly, the feasible region of~\eqref{eq:Rbd} model is the same as if the corresponding nondominated points $p, t$ are considered instead of the weakly nondominated ones. The only difference is in the selection of the reference point $s^b$. Because of this special structure, having boxes with weakly nondominated corners is not a problem in the sense that all the remaining nondominated points are included in the set of boxes to be searched.\end{remark}
	
	\begin{algorithm}[!] 
		
		\DontPrintSemicolon
		\fontsize{12pt}{12pt}\selectfont	
		\caption{The Proposed Algorithm for BOIP}
		\SetKwInOut{Input}{Input}
		\SetKwInOut{Output}{Output}	
		\Input{Problem \eqref{P}}
		\Output{The set of all nondominated solutions ($\mathcal{N}$)}
		\SetKwBlock{Initializations}{Initializations}{}
		
		\BlankLine
		\Initializations
		{		\begin{enumerate}[({I}1)]
				\item Set $d\geq 0$, $0<\epsilon<1$
				\item Solve $\text{lexmin}\{\,z_1(x),z_2(x)\st x\in\mathcal{X}\,\}$ 
				\Comment*[r]{to find the nondominated point $t^{0}$ } 			
				\item Solve $\text{lexmin}\{\,z_2(x),z_1(x)\st x\in\mathcal{X}\,\}$ \Comment*[r]{to find the nondominated point $p^{0}$ }			
				\item $\mathcal{N}=\{t^{0},p^{0}\}$, $s^{0}=(t^{0}_1, \ p^{0}_2)^T$, $\mathcal{B}=\{b(s^{0}, p^{0}, t^{0})\}$		
			\end{enumerate}
		} 	
		\fontsize{12pt}{12pt}\selectfont	
		\SetKwBlock{MainLoop}{MainLoop}{}
		\BlankLine
		\label{Alg}
		\MainLoop
		{
			\While{$\mathcal{B}$ is not empty}{		
				Let  $b(s^{b}, p^{b}, t^{b})\in \mathcal{B}$ and solve \eqref{eq:Rbd} \;
				\If {\eqref{eq:Rbd} is feasible}{  				
					$y^{b}=s^{b}+\alpha^{b}d$ \;
					$n^{b}=z(x^{b})$ \;	
					\eIf{$y^{b}_1=n^{b}_1$}{
						Solve  \eqref{eq:P1}. Let $ x^{1} $ be an optimal solution. \;
						$n^{1}=z(x^{1})$   \;				
					}{
						$n^{1}=n^{b}$ \;
					}
					\eIf{$y^{b}_2=n^{b}_2$}{
						Solve  \eqref{eq:P2} Let $ x^{2} $ be an optimal solution.\;
						$n^{2}=z(x^{2})$ \;				
					}{
						$n^{2}=n^{b}$ \;
					}	
					\If{$n^1_2<n^b_2$}{$\mathcal{N} \leftarrow \mathcal{N} \cup \{n^{1}\}$ \;}  	
					\If{$n^2_1<n^b_1$}{
						$\mathcal{N} \leftarrow \mathcal{N} \cup \{n^{2}\}$ \;
					}
					\If{$n^1_2 \geq n^b_2$ and $n^2_1 \geq n^b_1$}{
						$\mathcal{N} \leftarrow \mathcal{N} \cup \{n^{b}\}$ \;
					}	
					$s^{1}=(n^{1}_1, \ p^{b}_2)^T$  \Comment*[r]{first box $b(s^{1}, p^b, n^{1})$}   
					$s^{2}= (t^{b}_1, \ n^{2}_2)^T $ 
					\Comment*[r]{second box $b(s^{2}, n^{2}, t^b)$}	
					
					\If{$p^{b}_1-s^{1}_1 > 1$ and $n^{1}_2-s^{1}_2 > 1$}{
						
						$\mathcal{B} \leftarrow \mathcal{B} \cup \{b(s^{1}, p^b, n^{1})\}$}			
					\If{$n^{2}_1-s^{2}_1 > 1$ and $t^b_2-s^{2}_2 > 1 $ }{
						$\mathcal{B} \leftarrow \mathcal{B} \cup \{b(s^{2}, n^{2}, t^b)\}$  \;
					}		
				}
				$\mathcal{B}\gets \mathcal{B}\setminus \{b(s^{b}, p^{b}, t^{b})\} $ \;			
			}		
		}
	\end{algorithm}
	
	The algorithm works correctly and returns the set of all nondominated points after finitely many iterations. These are shown by the following two propositions.
	\begin{proposition}\label{prop:exact}
		Algorithm~\ref{Alg} works correctly: It returns the set of all nondominated points.
	\end{proposition}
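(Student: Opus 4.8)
The plan is to establish two things: (a) every point added to $\mathcal{N}$ is genuinely nondominated, and (b) no nondominated point of $(P)$ is ever lost, i.e.\ at termination $\mathcal{N}$ contains all of $\mathcal{N}$. Correctness then follows by combining the two.

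For (a), I would first handle the initialization: $t^0$ and $p^0$ come from lexicographic minimizations, which always yield efficient solutions (as noted after \eqref{eq:lexmin}), so the two initial members of $\mathcal{N}$ are nondominated. For the main loop, consider a box $b(s^b,p^b,t^b)$ for which \eqref{eq:Rbd} is feasible with optimal $(\alpha^b,x^b)$ and $n^b=z(x^b)$. By Lemma~\ref{lem:Lem2}, $y^b$ and $n^b$ agree in at least one coordinate, so at least one of \eqref{eq:P1}, \eqref{eq:P2} is solved. A point $z(x^1)$ coming from \eqref{eq:P1} minimizes $z_2$ over the fibre $\{x : z_1(x)=z_1(x^b)\}$; any point dominating it would have to have strictly smaller $z_1$ (it cannot have equal $z_1$ and smaller $z_2$, by optimality), but then — here one uses the box constraints $z_1(x)\le p_1^b-\epsilon$, $z_2(x)\le t_2^b-\epsilon$ and Lemma~\ref{lem:PS} guaranteeing $x^b$ is at least weakly efficient — I would argue $z(x^1)$ is in fact nondominated. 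The symmetric argument applies to $z(x^2)$. The guards on lines 18--23 ($n^1_2<n^b_2$, $n^2_1<n^b_1$, or else add $n^b$) are exactly what is needed to add a nondominated representative in each case; Remark~\ref{rem:weaknondom} already flags that one of $n^1,n^2$ may only be weakly nondominated, and I would lean on that remark to confirm that in such a case the guard correctly suppresses it and the genuinely nondominated partner (or $n^b$) is added instead.

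For (b), the key is a loop invariant: \emph{at the start of every iteration, every nondominated point of $(P)$ not already in $\mathcal{N}$ lies in the interior region of some box in $\mathcal{B}$.} More precisely, for each box $b(s,p,t)\in\mathcal{B}$, every $n\in\mathcal{N}$ with $t_1<n_1<p_1$ and $s_2\le n_2<t_2$ (roughly: strictly between the two "corner" points) is either in $\mathcal{N}$ or still reachable. The base case is Figure~\ref{fig:IB}: the initial box contains all of $\mathcal{Z}$, hence all of $\mathcal{N}$. For the inductive step, when box $b$ is processed: if \eqref{eq:Rbd} is infeasible, I must show the box contains no nondominated point other than $p^b,t^b$ — this follows because infeasibility of \eqref{eq:Rbd} (with the $\epsilon$-perturbed constraints and $d\ge 0$) together with the integrality of the objectives means no feasible objective vector sits strictly inside the box. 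If \eqref{eq:Rbd} is feasible, the dominated/dominating-region observation stated just before the splitting rule shows that removing the dominated cone above $n^1$ (resp.\ $n^2$) and the dominating cone below loses no nondominated points, so every nondominated point of the old box other than the newly recorded ones lies in one of the two child boxes $b(s^1,p^b,n^1)$, $b(s^2,n^2,t^b)$. Finally I must check the box-elimination step: a box with $p_1^b-s_1^b\le 1$ or $t_2^b-s_2^b\le 1$ cannot, by integrality, contain an integer-valued objective vector strictly between its corners, so discarding it is safe. Since the loop runs until $\mathcal{B}=\emptyset$, the invariant at termination gives $\mathcal{N}\supseteq\mathcal{N}$, and with (a) we get equality.

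The main obstacle I anticipate is the careful bookkeeping around \emph{weakly} nondominated corner points. Because \eqref{eq:Rbd} only guarantees weak efficiency (Lemma~\ref{lem:PS}) and the auxiliary problems \eqref{eq:P1}, \eqref{eq:P2} pin down only one coordinate, one of $n^1,n^2$ can fail to be nondominated, and then $p^b$ or $t^b$ in a later box is itself only weakly nondominated. The delicate point — which Remark~\ref{rem:weaknondom} isolates — is that the \emph{feasible region} of \eqref{eq:Rbd} depends on $p^b,t^b$ only through $p_1^b$ and $t_2^b$, and these coincide with the corresponding coordinates of genuine nondominated points; so the search is effectively carried out over the "true" box and no nondominated point escapes. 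Making this precise, and meshing it with the strict-versus-nonstrict inequalities in the invariant, is where the proof needs the most care; everything else is a routine case check against the pseudocode lines.
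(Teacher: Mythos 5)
Your proposal is correct and takes essentially the same route as the paper's own proof: part (a) is the paper's soundness argument (Lemma~\ref{lem:PS} plus the second-stage problems \eqref{eq:P1}/\eqref{eq:P2} guarantee every point added to $\mathcal{N}$ is nondominated), and part (b) is exactly the completeness claim the paper settles by appeal to the box-splitting structure and Remark~\ref{rem:weaknondom}. You merely make explicit, as a loop invariant with a case analysis covering infeasibility, the elimination rule, and weakly nondominated corners, what the paper compresses into three sentences.
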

	\begin{proof}
		The points that are added to set $\mathcal{N}$ are guaranteed to be nondominated. Indeed, \eqref{eq:Rbd} is a Pascoletti-Serafini scalarization with box contraints and by Lemma~\ref{lem:PS}, it returns a weakly efficient solution. By solving \eqref{eq:P1} and/or \eqref{eq:P2}, finding an efficient solution is guaranteed. Moreover, by the structure of defining the new boxes, see Remark~\ref{rem:weaknondom}, it is guaranteed that the set of all boxes to be searched ($\mathcal{B}$) includes all the remaining (if any) nondominated points at any time through the algorithm. 
	\end{proof}
	
	\begin{proposition} \label{Prop:UB}
		Algorithm 1 solves $(3N + C - 3C_2 -E -1 ) $ mixed integer programs, where $N=|\mathcal{N}|$ is the number of nondominated points, $C$ is the number of cases where $(y^b=n^b)$, $C_2$ is the number of sub-cases that two nondominated points are found and $ E $ is the number of eliminated boxes using the elimination rule. 
	\end{proposition}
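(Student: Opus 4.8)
The plan is to count the mixed integer programs solved by Algorithm~\ref{Alg} by sorting them into three groups: the two lexicographic initializations, the box-search models~\eqref{eq:Rbd}, and the ``ensuring'' models~\eqref{eq:P1} and~\eqref{eq:P2}. First I would fix notation: a lexicographic solve counts as two mixed integer programs, so initialization contributes $4$; alternatively, since the final answer is written in terms of $N$, it is cleaner to absorb these into a running tally and reconcile at the end. Let $I$ denote the number of iterations of the \texttt{while} loop, i.e., the number of boxes ever placed in $\mathcal{B}$ and subsequently selected; each such iteration solves exactly one~\eqref{eq:Rbd} model, contributing $I$ solves.

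Next I would relate $I$ to $N$, $C_2$, and $E$ via a counting argument on how boxes are created. The initial box is one box; thereafter, each feasible iteration creates at most two child boxes, but a child box survives (is actually added to $\mathcal{B}$) only if it passes the size test, and exactly $E$ candidate child boxes are rejected by the elimination rule. The key structural fact to extract is that the number of boxes ever created equals $1$ plus (twice the number of feasible iterations) minus (number of child boxes not passing the size test), but this is awkward; the cleaner route is to observe that boxes and nondominated points are in near-bijection. Each feasible iteration adds either one new nondominated point ($n^1$ or $n^2$, the ``single'' sub-case) or two new nondominated points (the $C_2$ sub-case where both~\eqref{eq:P1} and~\eqref{eq:P2} yield genuinely new points); an infeasible iteration adds none. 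So if $F$ is the number of feasible iterations and $F_1 = F - C_2$ the number of single-point feasible iterations, then the $N - 2$ non-initial nondominated points satisfy $N - 2 = F_1 + 2C_2 = F + C_2$, giving $F = N - 2 - C_2$. The infeasible iterations are exactly those where~\eqref{eq:Rbd} returns infeasible; I would argue each box leads to exactly one iteration and a box is either split (feasible) or closed (infeasible), and the total number of boxes generated is $1 + (\text{children created})$; children created $= 2F - E$ where $E$ counts rejected-by-size children — wait, one must be careful whether $E$ counts children rejected at creation time or boxes found empty. Reading the statement, $E$ is ``the number of eliminated boxes using the elimination rule'', i.e., the size test on lines 24–30, so $E$ children are discarded. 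Hence total boxes $= 1 + 2F - E$, and this equals $I$ (every generated box is eventually selected), so $I = 1 + 2F - E = 1 + 2(N - 2 - C_2) - E = 2N - 3 - 2C_2 - E$.

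Finally I would count the~\eqref{eq:P1}/\eqref{eq:P2} solves. By Lemma~\ref{lem:Lem2}, in each feasible iteration $y^b$ and $n^b$ agree in at least one component, so at least one of~\eqref{eq:P1},~\eqref{eq:P2} is solved; both are solved precisely in the $C$ cases where $y^b = n^b$. Thus the number of ensuring solves is $F + C = (N - 2 - C_2) + C$. Adding the three groups: initialization $4$, plus $\eqref{eq:Rbd}$ solves $I = 2N - 3 - 2C_2 - E$, plus ensuring solves $N - 2 - C_2 + C$, gives $4 + 2N - 3 - 2C_2 - E + N - 2 - C_2 + C = 3N + C - 3C_2 - E - 1$, as claimed. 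The main obstacle I anticipate is pinning down the bookkeeping in the middle step — precisely which boxes $E$ counts (candidate children failing the size test versus boxes later found to yield an infeasible~\eqref{eq:Rbd}), and confirming that every box ever added to $\mathcal{B}$ is indeed selected exactly once (so that $I$ equals the total number of boxes); these hinge on a careful reading of lines 4 and 24–31 of the pseudocode and the termination argument, but once the identity total boxes $= 1 + 2F - E$ is justified, the arithmetic closes cleanly.
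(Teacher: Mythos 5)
Your proof is correct and follows essentially the same route as the paper's: both decompose the count into the four initialization solves, the \eqref{eq:Rbd} solves (one per box, with $E$ candidate children removed by the size test), and the \eqref{eq:P1}/\eqref{eq:P2} solves (one per feasible iteration plus one extra in each of the $C$ cases). The only difference is bookkeeping — you organize the box count per feasible iteration via $F=N-2-C_2$ and $I=1+2F-E$, whereas the paper attributes boxes to the nondominated points that generate them — and the two tallies coincide term by term.
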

	
	\begin{proof} The following expression, parts $(a)-(g)$ of which will be explained in detail, shows the number of models solved:
		\begin{equation*}\label{eq:nuofmodels}
		\underbrace{(4)}_{\textbf{(a)}}+\underbrace{(1)}_{\textbf{(b)}} + \underbrace{(2C_2)}_{\textbf{(c)}}  +\underbrace{2(N-2 -2C_2)}_{\textbf{(d)}}  +\underbrace{(N-2)}_{\textbf{(e)}} + \underbrace{(C-C_2)}_{\textbf{(f)}} - \underbrace{(E)}_{\textbf{(g)}}
		%	\underbrace{(4)}_{(a)}+\underbrace{(1)}_{(b) - R(b, d)} + \underbrace{(2C_2)}_{(c) - R(b, d)}  +\underbrace{2(N-2 -2C_2)}_{(d) - R(b, d)} +\underbrace{(N-2)}_{(e) - P_1(x^b)/P_2(x^b)} + \underbrace{(C-C_2)}_{(f) - P_1(x^b)/P_2(x^b)}- \underbrace{E}_{(g)}
		\end{equation*}
		At the beginning of Algorithm 1, two lexicographical   minimization problems are solved to find $t^0$ and $p^0$ \textbf{(a)} and one~\eqref{eq:Rbd} problem is solved to search the initial box at the first iteration of the while loop \textbf{(b)}. $2C_2$ points are found in $C_2$ number of cases ($y^b=n^b$ and two solutions are found), each of these points leads to a new box, hence a new~\eqref{eq:Rbd} model \textbf{(c)}. 
		%In Y* iterations 2 nondominated points are found, which yields two new boxes (hence two new R() models to be solved) , one new box per point). 
		For the rest of the nondominated points, ($ N-2C_2-2 $), each point results in two new boxes (and hence two~\eqref{eq:Rbd} models to be solved) \textbf{(d)}. As for the ($P_i(x^b)$) models: $ N $-2 points are found by solving a single second stage model (either~\eqref{eq:P1} or~\eqref{eq:P2}) \textbf{(e)}. Moreover, when $y^b=n^b$ and only a single nondominated point is found (in $ C-C_2 $ number of cases), we solve an extra~\eqref{eq:P1} or~\eqref{eq:P2}, which does not yield a new point \textbf{(f)}. Finally, $ E $ boxes are eliminated, avoiding the \eqref{eq:Rbd} models that would otherwise have been solved \textbf{(g)}.
	\end{proof}
	
	Note that the values for $C, C_2, E$ may not be deterministic even for a particular instance. Indeed, if the solver breaks ties arbitrarily whenever there are multiple optimal solutions, the order of the nondominated points found may change. This, of course, could affect the values of $C, C_2$ and $E$. Below, we provide the best- and  worst-case bounds depending only on $N$. 
	\begin{proposition}\label{prop:bounds}
		Without using the elimination rule, the upper and lower bounds on the number of mixed integer programs solved through the algorithm are $4N-3$ and $2N+1$, respectively.  
	\end{proposition}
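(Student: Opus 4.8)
The plan is to take the exact model count from Proposition~\ref{Prop:UB} and optimise it over the admissible values of the two combinatorial parameters appearing in it. Since ``without using the elimination rule'' means no box is ever discarded by the size test, we have $E=0$, and Proposition~\ref{Prop:UB} then states that the algorithm solves exactly $3N + C - 3C_2 - 1$ mixed integer programs. Hence everything reduces to determining the range of the quantity $C - 3C_2$ over the pairs $(C,C_2)$ that can actually occur during a run.

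Next I would pin down which pairs $(C,C_2)$ are admissible. Two constraints are immediate from the meaning of the symbols: $C_2 \ge 0$, and $C \ge C_2$, because two nondominated points can be produced in one iteration only when both \eqref{eq:P1} and \eqref{eq:P2} are solved, i.e.\ only within the $C$ iterations having $y^b=n^b$. A third constraint comes from reusing the bookkeeping in the proof of Proposition~\ref{Prop:UB}: the $N-2$ nondominated points found inside the while loop split into those obtained from iterations solving a single one of \eqref{eq:P1}/\eqref{eq:P2} (say $a\ge 0$ of them, one point each), the $C-C_2$ iterations with $y^b=n^b$ producing one point, and the $C_2$ iterations with $y^b=n^b$ producing two; thus $a+(C-C_2)+2C_2 = N-2$, and $a\ge 0$ yields $C+C_2 \le N-2$.

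With the polytope $\{(C,C_2): 0\le C_2\le C,\ C+C_2\le N-2\}$ in hand, the two bounds follow by a short linear-optimisation argument. For the upper bound, $C-3C_2 \le C \le C+C_2 \le N-2$ (using $C_2\ge 0$ twice), so the count is at most $3N+(N-2)-1 = 4N-3$; this value is attained, for example, by instances in which every productive iteration has $y^b=n^b$ and returns exactly one new nondominated point, so that $C=N-2$ and $C_2=a=0$. For the lower bound, $C\ge C_2$ gives $C-3C_2\ge -2C_2$, and $2C_2\le C+C_2\le N-2$ gives $-2C_2\ge -(N-2)$, so the count is at least $3N-(N-2)-1 = 2N+1$; equality holds, for $N$ even, for instances in which every productive iteration has $y^b=n^b$ and returns two new nondominated points ($C=C_2$, $a=0$).

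The only step that needs genuine care is the identity $a+(C-C_2)+2C_2 = N-2$, that is, checking that every iteration counted in $C$ really does contribute at least one new nondominated point and that no nondominated point is added to $\mathcal{N}$ more than once; both facts are already built into the accounting of Proposition~\ref{Prop:UB} and into the correctness argument of Proposition~\ref{prop:exact}, so once they are invoked the remainder is the elementary optimisation above.
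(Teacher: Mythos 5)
Your proposal is correct and follows essentially the same route as the paper: both substitute $E=0$ into the count $3N+C-3C_2-1$ from Proposition~\ref{Prop:UB}, establish the feasible region $C_2\le C\le N-2-C_2$, and optimise the linear expression over it, with the same extremal cases $C=N-2,\ C_2=0$ and $C=C_2=\tfrac{N-2}{2}$. The only difference is cosmetic: you justify the constraint $C+C_2\le N-2$ via an explicit accounting identity $a+(C-C_2)+2C_2=N-2$, whereas the paper states the same fact in words.
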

	\begin{proof}
		By Proposition~\ref{Prop:UB}, Algorithm~\ref{Alg} solves $3N+C-3C_2-1$ mixed integer programs without the elimination rule. Moreover, by the definition of $C$ and $C_2$ we have 
		\begin{equation} \label{eq:ineq}
		C_2 \leq C \leq N-2-C_2.
		\end{equation}
		The last inequality follows by the fact that two nondominated points are found at the initialization step and $C_2$ of them are found additionally if two nondominated points are obtained after solving a Pascoletti-Serafini scalarization. The worst case occurs if $C_2=0$ and $C=N-2$, which yields the upper bound $4N-3$. 
		
		For the best case, it is required that $C$ takes its lowest possible value, which implies $C=C_2$. In this case the number of mixed integer programs can be written as $3N-2C_2-1$. Clearly, $C_2$ needs to take its highest possible value for the best case. By~\eqref{eq:ineq}, we have $C_2 \leq \frac{N-2}{2}$, hence the best case occurs if $C=C_2=\frac{N-2}{2}$, which yields the lower bound $2N+1$. 
	\end{proof} 
	Note that the number of eliminated boxes depends highly on the structure of the problem. In the worst case $E$ could be 0, while in the best case it could be as high as $N-1$. Consider an instance with $N=2^a+1$ for some integer $a$. If the nondominated points are located exactly on the integer diagonals of a $2^a \times 2^a$ plane, the number of eliminated boxes would be $N-1$. This can easily be seen by induction. 
	
	\subsection{An Alternative Splitting Strategy}\label{subec:Split}
	
	The new search regions added to $\mathcal{B}$ in each iteration can be chosen differently. In addition to the base version that is described above, we consider employing $y^b=s^b+\alpha^bd$ in defining the new regions. Accordingly, we use $y^b$ instead of $ n^b $ in order to define a new box if it yields a smaller region than the base version, see Figures \ref{fig:usingN} and \ref{fig:usingY}. This is done by replacing lines 12 and 17 of Algorithm 1 with $n^1 = y^b$ and $n^2 = y^b$, respectively. Notice that since the corners of the newly formed boxes are not necessarily integer valued, one also needs to change the elimination rule slightly. More specifically, for this variant the strict inequalities on lines 26 and 28 of Algorithm 1 are replaced by greater than or equal to signs. The following proposition shows this splitting strategy also works correctly. 
	\begin{proposition}\label{prop:yb}
		Algorithm 1 works correctly if $y^b= s^b+\alpha^bd$ is used in order to partition the boxes.
	\end{proposition}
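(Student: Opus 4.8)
The plan is to re‑run the correctness argument of Proposition~\ref{prop:exact}, which rests on two invariants: (a) every point inserted into $\mathcal{N}$ is nondominated, and (b) at every moment the boxes in $\mathcal{B}$ jointly contain all nondominated points not yet in $\mathcal{N}$. Invariant (a) is immediate: $\mathcal{N}$ is still updated only on lines 18--23 with members of $\{n^1,n^2,n^b\}$, and whenever the modification sets $n^1=y^b$ (resp.\ $n^2=y^b$) one has $y^b_2=n^b_2$ (resp.\ $y^b_1=n^b_1$) by Lemma~\ref{lem:Lem2}, so the guard on line~18 (resp.\ line~20) fails and $y^b$ itself is never added; the efficiency of the points actually added is then exactly as in Proposition~\ref{prop:exact}. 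Termination is likewise unaffected, because each modified child box is contained in the base one and the modified elimination rule (``$\ge 1$'' for ``$>1$'' on lines 26 and 28) still discards only boxes that can carry no new nondominated point. Hence the real work is re‑establishing (b).

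For (b) I would argue box by box. By Lemma~\ref{lem:Lem2} the substitution $n^1=y^b$ is triggered exactly when $n^b_1<y^b_1$ and $y^b_2=n^b_2$ (the case $n^2=y^b$ is symmetric, obtained by swapping coordinates). Then the second child box is unchanged and the first child box shrinks from $[n^b_1,p^b_1]\times[p^b_2,n^b_2]$ to $[y^b_1,p^b_1]\times[p^b_2,n^b_2]$, so the region relinquished is $[n^b_1,y^b_1)\times[p^b_2,n^b_2]$; its top face $\{y_2=n^b_2\}$ consists of points dominated by, or equal to, $n^2$ (since $y^b_2=n^b_2$ forces \eqref{eq:P2} to be solved, giving $n^2_2=n^b_2$ and $n^2_1\le n^b_1$). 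So everything reduces to the key claim: the strip $S=\{\,y:\ n^b_1\le y_1<y^b_1,\ p^b_2\le y_2<n^b_2\,\}$ contains no nondominated point outside $\mathcal{N}$. Let $\hat y=z(\hat x)\in S$ be a nondominated point lying in the box $b$ under consideration. If $\hat y_1=p^b_1$ then $\hat y$ is the unique nondominated point with that first coordinate, which by a strengthened form of Remark~\ref{rem:weaknondom} — tracking, inductively down the box tree, a genuine nondominated point sharing the pertinent coordinate of each (possibly infeasible) corner $y^{b'}$ — is already in $\mathcal{N}$. Otherwise $\hat y_1<p^b_1$, so $z_1(\hat x)\le p^b_1-\epsilon$ by integrality of the objectives and $0<\epsilon<1$, while $z_2(\hat x)=\hat y_2<n^b_2\le t^b_2-\epsilon$; and with $\alpha'=\max\{(\hat y_1-s^b_1)/d_1,\ (\hat y_2-s^b_2)/d_2\}$ (if $d_1=0$ or $d_2=0$ one checks instead that $S\cap b=\emptyset$, since then $y^b_1=s^b_1$ or $y^b_2=s^b_2$), the inequalities $\hat y_1<y^b_1=s^b_1+\alpha^b d_1$ and $\hat y_2<n^b_2=y^b_2=s^b_2+\alpha^b d_2$ give $\alpha'<\alpha^b$ and $z(\hat x)\le s^b+\alpha' d$. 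Then $(\alpha',\hat x)$ is feasible for \eqref{eq:Rbd} with objective below $\alpha^b$, contradicting optimality of $(\alpha^b,x^b)$. Thus $S$ loses no nondominated point, and since inside $b$ the complement of the two modified child boxes lies in $S$ together with the dominated and dominating regions of $n^1,n^2,n^b$ — which hold no nondominated points, exactly as in Proposition~\ref{prop:exact} — invariant (b) is restored, and (a), (b) plus termination give the proposition.

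The genuine obstacle will be the boundary bookkeeping, not the core estimate: ruling out the edge case $\hat y_1=p^b_1$ forces the inductive strengthening of Remark~\ref{rem:weaknondom} that follows, down the tree of boxes, a true nondominated point with the same pertinent coordinate as each corner — now necessary because a corner can be an infeasible point $y^{b'}$ rather than a (weakly) nondominated one — and the degenerate directions $d_1=0$, $d_2=0$ must be disposed of by showing the relinquished strip meets $b$ in the empty set. Once $n^1=y^b$ is settled, $n^2=y^b$ is the mirror image.
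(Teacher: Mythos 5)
Your proposal is correct and follows essentially the same route as the paper's proof: the relinquished region between the $n^b$-based and $y^b$-based child boxes carries no nondominated point because such a point would yield a feasible pair $(\tilde\alpha,\tilde x)$ with $\tilde\alpha<\alpha^b$ for \eqref{eq:Rbd}, boundary points sharing a coordinate with $y^b$ are recovered by \eqref{eq:P1} or \eqref{eq:P2}, and the pertinent corner coordinates $p^b_1$, $t^b_2$ stay integer (anchored to genuine nondominated points) so the $\epsilon$-constraints exclude nothing new. You are merely more explicit than the paper about the edge cases ($\hat y_1=p^b_1$, degenerate directions) and about the induction behind Remark~\ref{rem:weaknondom}.
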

	\begin{proof}
		Similar to the proof of Proposition~\ref{prop:exact}, $\mathcal{N}$ consists of nondominated points. To complete the proof, we show that it is not possible to eliminate a region which may contain a nondominated point. Note that there is no nondominated point in region $R:= \{y \st s^b \leq y < s^b+\alpha^bd\},$ where $(\alpha^b,x^b)$ is an optimal solution of \eqref{eq:Rbd} for some box $b$ and direction $d$. Indeed, for a feasible solution $\tilde{x}$ with $z(\tilde{x})\in R$, there exists $\tilde{\alpha}<\alpha^b$ such that $(\tilde{\alpha}, \tilde{x})$ is feasible for \eqref{eq:Rbd}, which contradicts the optimality of $(\alpha^b,x^b)$.	Moreover, if there is a nondominated point $n$ satisfying $n_i = y^b_i$, for $i\in \{1,2\}$, then it is found by solving \eqref{eq:P1} or \eqref{eq:P2}. Hence, no nondominated point is missed from the closure of $R$. 
		
		On the other hand, by Lemma \ref{lem:Lem2}, $y^b$ has at least one integer component satisfying $y^b_i=n^b_i$. Then, for any box $b$ through the algorithm, $t^b$ and $p^b$ has at most one non-integer component. More specifically, by the structure of defining the new boxes, it is ensured to have $p^b_1, t^b_2 \in \mathbb{Z}$ and there exist nondominated points, say $p, t$ such that $p^b_1 = p_1$ and $t^b_2 = t_2$. Therefore, for any $0<\epsilon<1$, no efficient solution from box $b$ is excluded from the feasible region of \eqref{eq:Rbd}.  
	\end{proof}

	\begin{figure}[h] 		
		\centering	
		\begin{minipage}[b]{0.49\textwidth}
			\includegraphics[scale=0.5]{box1FixedNp.pdf}
			\caption{Forming new boxes using $ n^b $}
			\label{fig:usingN}
		\end{minipage}
		\hfill
		\begin{minipage}[b]{0.49\textwidth}
			\includegraphics[scale=0.5]{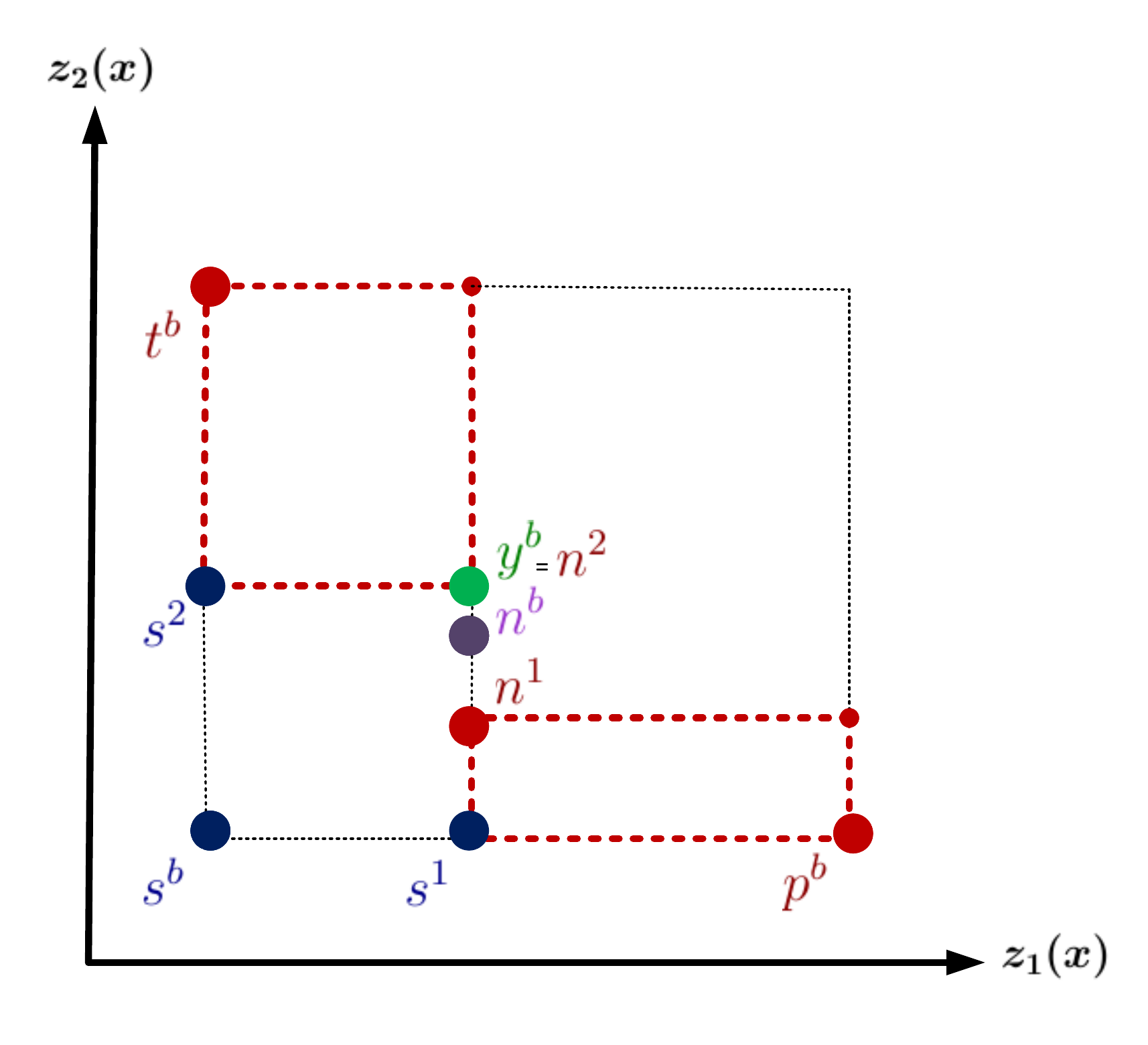}
			\caption{Forming new boxes using $ y^b $}
			\label{fig:usingY}
		\end{minipage} 		
	\end{figure}

	\section{Computational results} \label{sec:CE}
	
	The algorithm described in Section~\ref{sect: alg} can be implemented in different ways with respect to direction and splitting strategy choices. First of all, in each iteration the direction parameter $d$ can be fixed to $(1,1)^T$ \textbf{(Fixed)} or chosen according to the current box. We consider two alternatives for the latter: to set $d$ as the diagonal direction of the current box, $d = (p^b_1-s^b_1, t^b_2-s^b_2)^T$ \textbf{(Changing)} and to set $d$ as the direction starting from $s^b$ towards the nadir point, $d=(p^0_1-s^b_1, t^0_2-s^b_2)^T$  \textbf{(Nadir)}. Secondly, the splitting rule can be fixed as presented in Algorithm 1 or as explained in Section \ref{subec:Split}.
	
	The six combinations can be seen in Table~\ref{tab:var}. With a slight abuse of terminology, we refer to each such implementation as a variant of the algorithm.
	%The base version that is described in Section~\ref{sect: alg} is denoted by {FN}.
	
	\begin{table}[h]
		\centering
		\caption{The variants of the algorithm}
		\begin{tabular}{|c|c|c|c|}
			\hline
			\multirow{2}{*}{Variants}& \textbf{Fixed} & \textbf{Changing} & \textbf{Nadir}\\ 
			& ~~~~~~~$d = (1,1)^T$~~~~~~~ & ~$d = (p^b_1-s^b_1, t^b_2-s^b_2)^T$ & ~$d=(p^0_1-s^b_1, t^0_2-s^b_2)^T$ \\ \hline
			\textbf{Using $n^b$} & \multirow{2}{*}{{FN}} & \multirow{2}{*}{{CN}} & \multirow{2}{*}{{NN}} \\ 
			(always) & & & \\ \hline 
			\textbf{Using $y^b$} &  \multirow{2}{*}{{FY}} & \multirow{2}{*}{{CY}} & \multirow{2}{*}{{NY}}\\
			(if smaller) & & & \\ \hline 
		\end{tabular}
		\label{tab:var}
	\end{table}

	We examine the efficiency of the algorithms by solving knapsack and assignment problem instances which are used in \cite{Boland}\footnote{The instances are available at http://
		hdl.handle.net/1959.13/1036183}. Both problem types contain four classes, A, B, C, D each with five instances. The first set consists of biobjective Knapsack Problem (KP) instances with 375, 500, 625 and 750 variables. The second set consists of biobjective Assignment Problem (AP) instances with 200 $\times$ 200 and $300 \times 300$ binary variables.\par
	
	The algorithms are coded in C++ and all mixed integer programming models are solved using CPLEX 12.6. %Only a single thread is used.
	All of the instances are run on a computer with Intel Xeon CPU E5-1650 3.6 GHz processor and 64 GB RAM. Computation times are given in central processing unit (CPU) seconds. Unless otherwise stated, all performance measures are reported as average values over the five instances of each class.
	
	We first conduct preliminary experiments on type A knapsack and assignment instances in order to compare the performances of the algorithm variants.
	In Tables \ref{tab:PreKP} and \ref{tab:PreAP}, we report the average values for the number of nondominated points ($ N_{avg} $), the number of all (mixed) integer programming problems solved, the solution time (in CPU seconds), the number of \eqref{eq:Rbd} models solved, average time for solving one \eqref{eq:Rbd} model, average time for solving one ($P_i(x^b)$) model, $ C $,  $ C_2 $ and $E$, see Proposition \ref{Prop:UB}. 
	
	Overall, we see that partitioning a box using a nondominated point (e.g. $n^b$) is a better box defining strategy than using $y^b$. This leads to smaller number of problems solved, hence smaller solution times, except the KP case with changing direction according to nadir (see NN and NY in Table~\ref{tab:PreKP}). This good performance is mostly due to the increase in the number of  boxes that are eliminated (E) with our elimination rule (see lines 26 and 28 of Algorithm \ref{Alg}). 
	
	We observe that FN consistently performs good in terms of solution time over all test instances, being the fastest algorithm for KP and the second fastest for AP. 
	
	\begin{table}[htbp]
		\centering
		\caption{Comparison of alternative implementations for class A of the set KP}
		\resizebox{\textwidth}{!}{ 
			\begin{tabular}{|c|c|c|c|c|c|c|c|c|c|}
				\hline
				\multirow{2}[2]{*}{$ N_{avg} $} & \multirow{2}[2]{*}{Algorithm} & \multirow{2}[2]{*}{\# IP} & \multirow{2}[2]{*}{Run Time } & \multirow{2}[2]{*}{\# \eqref{eq:Rbd}} & \multirow{2}[2]{*}{Time per \eqref{eq:Rbd} } & \multirow{2}[2]{*}{Time per ($P_i(x^b)$)} & \multirow{2}[2]{*}{$ C $} & \multirow{2}[2]{*}{$C_2$} & \multirow{2}[2]{*}{$ E $} \\
				&       &       &       &       &       &       &       &       &  \\ \hline
				\multirow{6}[2]{*}{975.4} & FN   & 2541.20 & 838.06 & 1338.00 & 0.50  & 0.14  & 233.20 & 7.40  & 595.00 \\
				& FY   & 2762.80 & 947.70 & 1569.80 & 0.49  & 0.15  & 223.20 & 7.60  & 362.80 \\
				& CN   & 2398.60 & 894.72 & 1351.00 & 0.58  & 0.10  & 72.60 & 2.40  & 592.00 \\
				& CY   & 2512.60 & 1098.23 & 1520.40 & 0.62  & 0.14  & 15.20 & 0.40  & 426.60 \\
				& NN   & 2325.20 & 976.52 & 1347.60 & 0.65  & 0.10  & 0.20  & 0.00  & 600.20 \\
				& NY   & 2154.20 & 932.05 & 1176.80 & 0.67  & 0.14  & 0.00  & 0.00  & 771.00 \\
				\hline
		\end{tabular}}%
		\label{tab:PreKP}%
	\end{table}%
	
	\begin{table}[htbp]
		\centering
		\caption{Comparison of alternative implementations for class A of the set AP}
		\resizebox{\textwidth}{!}{ 
			\begin{tabular}{|c|c|c|c|c|c|c|c|c|c|}
				\hline
				\multirow{2}[2]{*}{$ N_{avg} $} & \multirow{2}[2]{*}{Algorithm} & \multirow{2}[2]{*}{\# IP} & \multirow{2}[2]{*}{Run Time } & \multirow{2}[2]{*}{\# \eqref{eq:Rbd} } & \multirow{2}[2]{*}{Time per \eqref{eq:Rbd} } & \multirow{2}[2]{*}{Time per ($P_i(x^b)$)} & \multirow{2}[2]{*}{$ C $} & \multirow{2}[2]{*}{$C_2$} & \multirow{2}[2]{*}{$ E $} \\
				&       &       &       &       &       &       &       &       &  \\ \hline
				\multirow{6}[0]{*}{708.4} & FN   & 1636.20 & 2150.36 & 699.20 & 2.29  & 0.58  & 246.60 & 20.00 & 674.60 \\
				& FY   & 1978.00 & 2764.41 & 1056.60 & 2.09  & 0.60  & 231.80 & 20.80 & 315.60 \\
				& CN   & 1553.40 & 2187.07 & 712.00 & 2.43  & 0.54  & 140.20 & 9.20  & 683.40 \\
				& CY   & 2206.40 & 3924.58 & 1372.00 & 2.52  & 0.58  & 132.40 & 8.40  & 25.00 \\
				& NN   & 1431.00 & 2043.09 & 720.60 & 2.33  & 0.50  & 0.00  & 0.00  & 693.20 \\
				& NY   & 1862.20 & 2931.79 & 1151.80 & 2.22  & 0.53  & 0.00  & 0.00  & 262.40 \\
				\hline
		\end{tabular}}%
		\label{tab:PreAP}%
	\end{table}%
	
	Based on these results, we conduct further preliminary experiments with FN, CN and NN variants. Since finding the whole set of nondominated points might be computationally demanding for most biobjective integer programming problems, early termination performances of the algorithms are also worth considering. Therefore, we run FN, CN and NN with predetermined time limits and report the quality of the set of nondominated points obtained. 
	
	Table \ref{tab:CG} shows the performance results for the three algorithm variants when they are run with time limits for class A of KP and AP. The time limit is set as 300 and 700 seconds for KP and AP, respectively. This corresponds to approximately 30\% of the time required to find the whole set of nondominated points. The table shows the average values of the number of nondominated points found ($\bar{N}$), the coverage error (CE), the scaled coverage error (SCE) and the scaled hypervolume gap (SGH) multiplied by $10^3$ for each variant.

	\begin{table}[htbp]
		\centering
		\caption{Representativeness results with time limits for class A instances}
		\resizebox{0.5\textwidth}{!}{ 	\begin{tabular}{|c|cccc|cccc|}
				\hline
				\multicolumn{1}{|c|}{} & \multicolumn{4}{c|}{KP} &  \multicolumn{4}{c|}{AP} \\\hline
				\multicolumn{1}{|c|}{\multirow{2}[1]{*}{Algorithm }} & \multirow{2}[1]{*}{$ \bar{{N}} $} & \multirow{2}[1]{*}{CE} & \multicolumn{1}{c}{\multirow{2}[1]{*}{SCE}} & \multicolumn{1}{c|}{\multirow{2}[1]{*}{SHG$\times 10^3$}} & \multirow{2}[1]{*}{$ \bar{{N}} $ } & \multirow{2}[1]{*}{CE} & \multicolumn{1}{c}{\multirow{2}[1]{*}{SCE}} & \multicolumn{1}{c|}{\multirow{2}[1]{*}{SHG$\times 10^3$}} \\					
				\multicolumn{1}{|c|}{} &  &     &       & \multicolumn{1}{c|}{} & \multicolumn{1}{c}{} &         &   & \multicolumn{1}{c|}{} \\\hline		
				FN    & 491   & 377.00 & 0.1086 &  8.1044	 &     227   & 719.20 & 0.2727 & 23.3229\\ 
				CN    & 530   & 15.00 & 0.0043 &    0.2715	  &  270   & 22.80 & 0.0086 &  0.3979 \\
				NN    & 365   & 561.00 & 0.1618 &  29.0612	  &    231   & 759.40 & 0.2879 & 29.0965 \\\hline
		\end{tabular}}%
		\label{tab:CG}
	\end{table}%

	It is seen that CN significantly outperforms the others with respect to representativeness. This result is expected as setting the direction as the diagonal vector of the box to be searched encourages the algorithm to find scattered solutions across the Pareto frontier and provides a highly representative set even at the early stages of the algorithm. 
	
	In Figures \ref{fig:KPFDN}-\ref{fig:KPNDN}, we provide the solution sets found when KP instances are solved with time limited versions of FN, CN and NN, respectively. Note that the approximation provided by CN outperforms the other two approximations, especially in terms of representing the tails of the frontier (top left and lower right). This result is expected and a clear benefit of choosing the direction vector based on the specifics of the box to-be-explored. To see why, consider an example case, where a box being explored is relatively wide in one dimension and narrow in the other. In that case, choosing a fixed direction vector may lead to finding a solution far away from the center of the box, as depicted in Figure \ref{fig:dispro}, where solving the scalarization with the fixed and changing direction policies will provide points 1 and 2, respectively. Clearly, 2 is a better representative subset of the set of solutions in the box, with respect to coverage error. It is seen in Figures \ref{fig:KPFDN}-\ref{fig:KPNDN} that such disproportionate boxes have to be explored to find solutions towards the tails of the Pareto frontier. When a time limit is applied and a subset of solutions are found with a direction vector different from the diagonal vector,  such solutions are not guaranteed to be close to the centers of the explored boxes, significantly reducing the representation quality of the subset.

	\begin{figure}[!]
		\centering	
		\begin{minipage}[b]{0.45\textwidth}
			\includegraphics[scale=0.45]{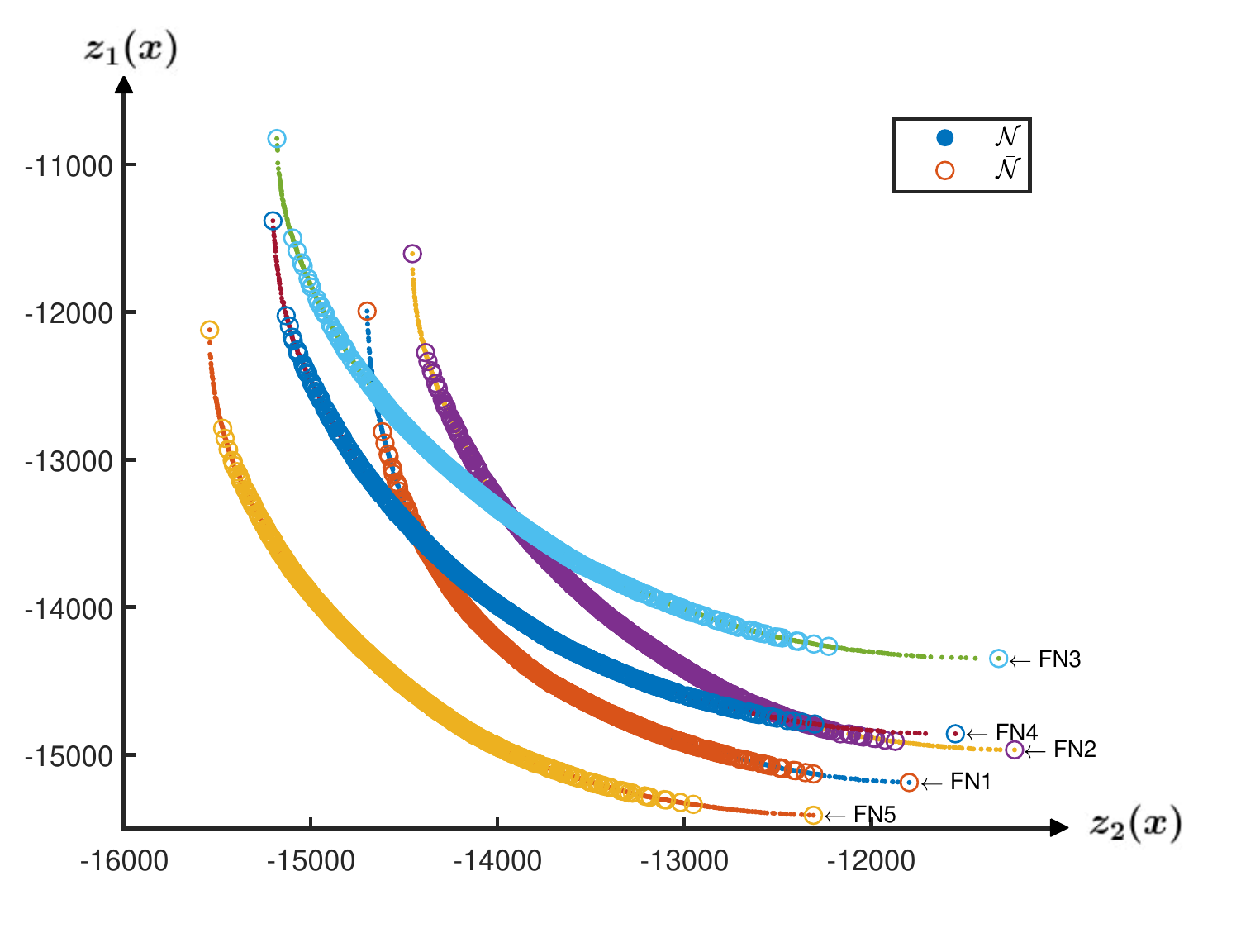}
			\caption{Solution sets found in time limited FN}
			\label{fig:KPFDN}
		\end{minipage}
		\hfill
		\begin{minipage}[b]{0.45\textwidth}
			\includegraphics[scale=0.45]{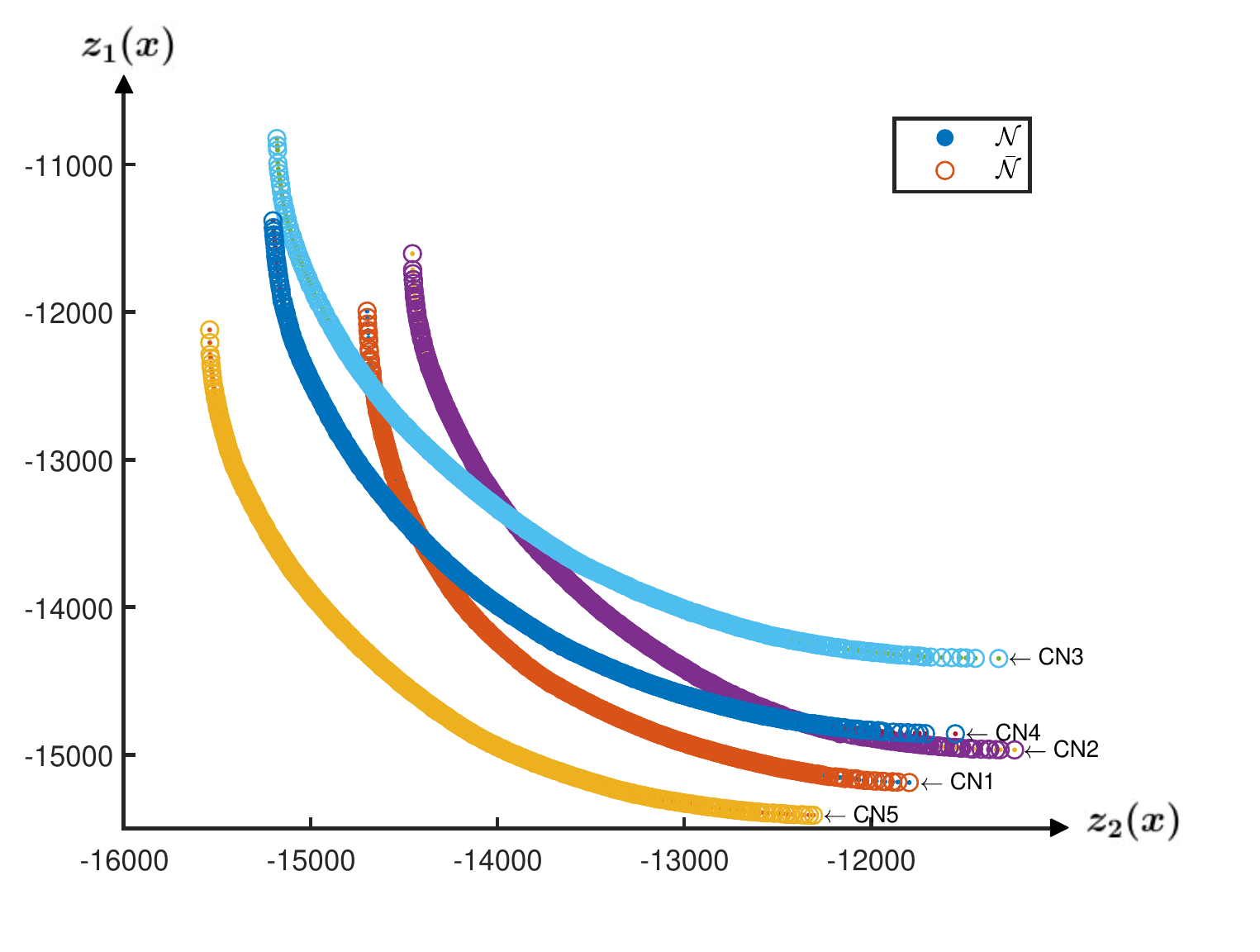}
			\caption{Solution sets found in time limited CN}
			\label{fig:KPCDN}
		\end{minipage}
		\vfill
		\begin{minipage}[b]{0.45\textwidth}
			\includegraphics[scale=0.45]{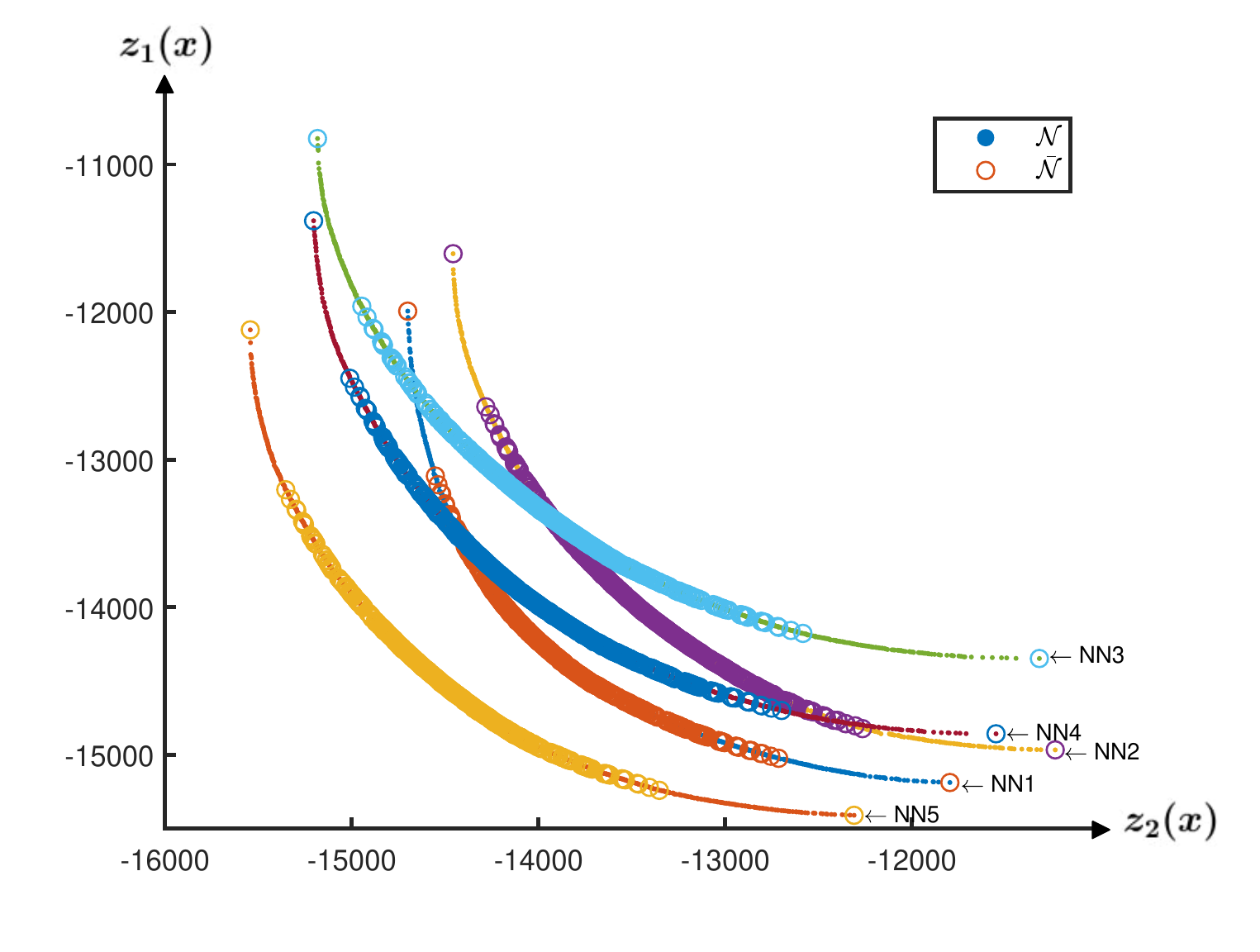}
			\caption{Solution sets found in time limited NN \vspace{1cm}}
			\label{fig:KPNDN}
		\end{minipage}
		\hfill
		\begin{minipage}[b]{0.45\textwidth}
			\includegraphics[scale=0.7]{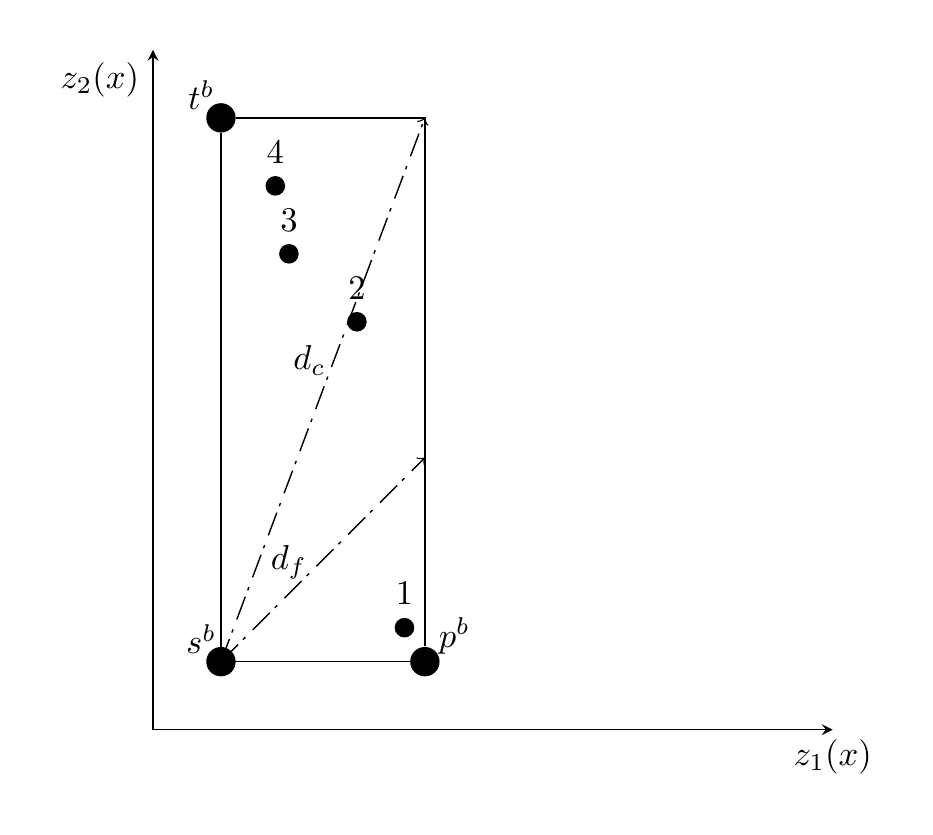}
			\caption{Effect of direction choice on a disproportional box $d_f=(1,1)$, $d_c$ is the diagonal vector}
			\label{fig:dispro}
		\end{minipage}
	\end{figure}

	In the first set of preliminary experiments, we eliminated the split strategy that defines the boxes using $y^b$, and concluded that FN, CN, NN are worth further consideration, FN being the most computationally efficient one. In the second set of experiments with time limits we have observed that CN is the top-performer. Based on these preliminary observations, we decided to perform the main experiments with FN as it is computationally more promising, and CN, as it outperforms the other variants under time restriction.
	
	Tables \ref{tab:MainKP} and  \ref{tab:MainAP}  show the results of our main experiments, in which we compare FN and CN over all instances of KP and AP. We report the average values for the number of nondominated solutions ($ N_{avg} $), the number of models solved, the total solution times, the number of \eqref{eq:Rbd} models solved and average time spent to solve \eqref{eq:Rbd} and  ($P_i(x^b)$) models as well as $C$, $C_2$ and $E$. The results verify the observations made at the preliminary experiments: although FN solves more (mixed) integer programming problems in total, it solves less of the more difficult \eqref{eq:Rbd} models, hence it works faster than CN. Moreover, when FN is used, the number of cases where $y^b=n^b$ ($C$) is significantly larger than that of CN. This is due to the nature of the direction vector used; moving along the search region in a fixed direction of $(1,1)^T$, the algorithm visits integer corners more often compared to a diagonal direction vector. This increases the cases where \eqref{eq:P1} and \eqref{eq:P2} are both solved within a box. Note that, in only a small portion of these cases two new nondominated points are found, implying that one ($P_i(x^b)$) is solved unnecessarily. However, since these models are much easier to solve compared to \eqref{eq:Rbd}, solving more of these does not significantly affect the computational performance of FN. 
	
	\begin{table}[htbp]
		\centering
		\caption{Results of the main experiments on KP}
		\resizebox{\textwidth}{!}{ 
			\begin{tabular}{cccccccccc}
				\multirow{2}[0]{*}{Class:$ N_{avg} $} & \multirow{2}[0]{*}{Algorithm} & \multirow{2}[0]{*}{\# IP} & \multirow{2}[0]{*}{Run Time} & \multirow{2}[0]{*}{\# \eqref{eq:Rbd}} & \multirow{2}[0]{*}{Time per \eqref{eq:Rbd} } & \multirow{2}[0]{*}{Time per ($P_i(x^b)$)} & \multirow{2}[0]{*}{$ C $} & \multirow{2}[0]{*}{$ C_2 $} & \multirow{2}[0]{*}{$ E $} \\
				&       &       &       &       &       &       &       &       &  \\\hline
				\multirow{2}[0]{*}{A:975.4} & FN   & 2541.20 & 838.06 & 1338.00 & 0.50  & 0.14  & 233.20 & 7.40  & 595.00 \\
				& CN   & 2398.60 & 894.72 & 1351.00 & 0.58  & 0.10  & 72.60 & 2.40  & 592.00 \\
				\multirow{2}[0]{*}{B:1539.4} & FN   & 3913.00 & 1546.16 & 1984.20 & 0.62  & 0.15  & 409.80 & 22.40 & 1046.80 \\
				& CN   & 3704.00 & 2711.98 & 2027.80 & 1.04  & 0.33  & 140.00 & 5.20  & 1037.60 \\
				\multirow{2}[0]{*}{C:2176.2} & FN   & 5453.60 & 2539.96 & 2665.00 & 0.76  & 0.18  & 657.20 & 46.80 & 1590.80 \\
				& CN   & 5152.20 & 3459.57 & 2744.00 & 1.12  & 0.16  & 239.40 & 9.40  & 1586.60 \\
				\multirow{2}[0]{*}{D:2791.8} & FN   & 6934.40 & 4605.52 & 3231.40 & 1.06  & 0.34  & 995.20 & 86.00 & 2177.20 \\
				& CN   & 6503.20 & 5404.77 & 3345.80 & 1.43  & 0.19  & 383.80 & 20.20 & 2194.40 \\
		\end{tabular}}%
		\label{tab:MainKP}%
	\end{table}%
	
	\begin{table}[htbp]
		\centering
		\caption{Results of the main experiments on AP}
		\resizebox{\textwidth}{!}{ 
			\begin{tabular}{cccccccccc}
				\multirow{2}[0]{*}{Class:$ N_{avg} $} & \multirow{2}[0]{*}{Algorithm} & \multirow{2}[0]{*}{\# IP} & \multirow{2}[0]{*}{Run Time} & \multirow{2}[0]{*}{\# \eqref{eq:Rbd}} & \multirow{2}[0]{*}{Time per \eqref{eq:Rbd} } & \multirow{2}[0]{*}{Time per ($P_i(x^b)$)} & \multirow{2}[0]{*}{$ C $} & \multirow{2}[0]{*}{$ C_2 $} & \multirow{2}[0]{*}{$ E $} \\
				&       &       &       &       &       &       &       &       &  \\\hline
				\multirow{2}[0]{*}{A:708.4} & FN   & 1636.20 & 2150.36 & 699.20 & 2.29  & 0.58  & 246.60 & 20.00 & 674.60 \\
				& CN   & 1553.40 & 2187.07 & 712.00 & 2.43  & 0.54  & 140.20 & 9.20  & 683.40 \\
				\multirow{2}[0]{*}{B:1416.2} & FN   & 3247.20 & 5354.08 & 1475.80 & 2.85  & 0.64  & 379.20 & 26.00 & 1301.60 \\
				& CN   & 3096.20 & 5519.86 & 1506.20 & 3.02  & 0.60  & 177.60 & 5.80  & 1311.60 \\
				\multirow{2}[0]{*}{C:823.6} & FN   & 1895.00 & 5644.20 & 803.60 & 5.22  & 1.32  & 288.80 & 23.00 & 794.60 \\
				& CN   & 1839.40 & 11212.29 & 815.80 & 12.04 & 1.33  & 210.60 & 12.60 & 803.20 \\
				\multirow{2}[0]{*}{D:1827} & FN   & 4140.20 & 16403.48 & 1808.20 & 6.95  & 1.64  & 561.40 & 58.40 & 1726.00 \\
				& CN   & 3980.40 & 17451.84 & 1860.40 & 7.61  & 1.54  & 304.00 & 13.00 & 1764.60 \\
		\end{tabular}}%
		\label{tab:MainAP}%
	\end{table}%

	CN works slower compared to FN but our preliminary experiments show that it is promising when used with time limits. We verified this observation by performing experiments for the whole KP and AP sets with time limit, the results of which are provided in Table \ref{tab:CGFull}.

	\begin{table}[htbp]
		\centering
		\caption{Representativeness results (average) with time limits for the full set of problem instances}
		\resizebox{\textwidth}{!}{ 
			\begin{tabular}{c|c|cccc|cccc|c|cccc|cccc}
				\multicolumn{10}{c}{~~~~~~~~~~~~~~~~~KP} & \multicolumn{9}{c}{AP} \\ \hline
				\multicolumn{6}{c}{~~~~~~~~~~~~~~~~~~~~~~FN} & \multicolumn{4}{c}{CN} & \multicolumn{5}{c}{~~~~~~FN} & \multicolumn{4}{c}{CN} \\ \hline
				\multirow{2}[0]{*}{Class} & \multirow{2}[0]{*}{Time} & \multirow{2}[0]{*}{$ \bar{{N}} $} & \multirow{2}[0]{*}{ CE} & \multirow{2}[0]{*}{ SCE} & \multirow{2}[0]{*}{ SHG$\times 10^3$} & \multirow{2}[0]{*}{$ \bar{
						{N}} $} & \multirow{2}[0]{*}{ CE} & \multirow{2}[0]{*}{ SCE} & \multirow{2}[0]{*}{ SHG$\times 10^3$} & \multirow{2}[0]{*}{Time} & \multirow{2}[0]{*}{$ \bar{{N}}$} & \multirow{2}[0]{*}{ CE} & \multirow{2}[0]{*}{ SCE} & \multirow{2}[0]{*}{ SHG$\times 10^3$} & \multirow{2}[0]{*}{$ \bar{{N}} $} & \multirow{2}[0]{*}{ CE} & \multirow{2}[0]{*}{ SCE} & \multirow{2}[0]{*}{ SHG$\times 10^3$} \\
				&       &       &       &       &       &       &       &       &       & & & & &       &       &       &       &  \\\hline
				
				A     & 300   & 491.00 & 377.00 & 0.1086 & 8.1044	& 530.00 & 15.00 & 0.0043 & 0.2715	 & 700   & 227.00 & 719.20 & 0.2727 & 23.3229	 & 269.80 & 22.80 & 0.0086 & 0.3979\\
				B     & 700   & 810.80 & 431.40 & 0.0914 & 5.9433	 & 789.60 & 16.80 & 0.0036 & 0.2579	 & 1820  & 479.60 & 2479.40 & 0.3131 & 17.7713	 & 621.40 & 36.60 & 0.0046 & 0.0990\\
				C     & 1000  & 998.00 & 594.40 & 0.0981 & 6.3360 & 814.80 & 21.40 & 0.0035 & 0.3443	 & 2810  & 380.20 & 571.00 & 0.2254 & 18.2876 & 246.20 & 40.00 & 0.0157 & 	0.8229\\
				D     & 1670  & 1345.80 & 705.20 & 0.0987 & 5.9066	      & 1074.40 & 18.60 & 0.0026 & 0.2374	 & 5650  & 617.20 &  3077.00 & 0.3161 & 17.5117	 & 764.00 & 54.60 & 0.0056 & 0.0841 \\ \hline
				
		\end{tabular}}%
		\label{tab:CGFull}%
	\end{table}%

	Overall, one can conclude that both variants are powerful in different aspects. When used to find the complete set of nondominated points, FN works better since it runs faster. However, CN is also very promising when run with a time limit since it quickly provides a highly representative subset of solutions.

	%\subsection{An extension of CN}
	\subsection{Extensions} \label{subsec:Ext}
	In this section we present three extensions of Algorithm 1. The first of these extensions is specific to CN.  The other two can be implemented for all variants, however, we illustrate their effects also using CN, for demonstration purposes.
	
	When we examine the results of average time spent for an \eqref{eq:Rbd} model, we observe that there is significant difference between FN and CN for class C of AP, see Table \ref{tab:MainAP}. In these instances, average time spent per \eqref{eq:Rbd} model in CN is more than twice of the time spent in FN. To investigate this further, we check the solution times of each individual \eqref{eq:Rbd} model solved in CN for these instances. We summarize the results for class C instances in Figures 	\ref{fig:His} and \ref{fig:His_out}, where we report the distribution of runtimes with occurrence frequencies over all scalarization models solved in these instances (in total: 4079 models, 222 of which were infeasible). To improve visibility, the outliers (models with run time higher than 30 seconds) are excluded from Figure \ref{fig:His} and  are separately reported in Figure \ref{fig:His_out}. We see that the majority of the total time is occupied by only few models. 
	\begin{figure}[h] 		
		\centering	
		\begin{minipage}[b]{0.5\textwidth}
			\includegraphics[scale=0.33]{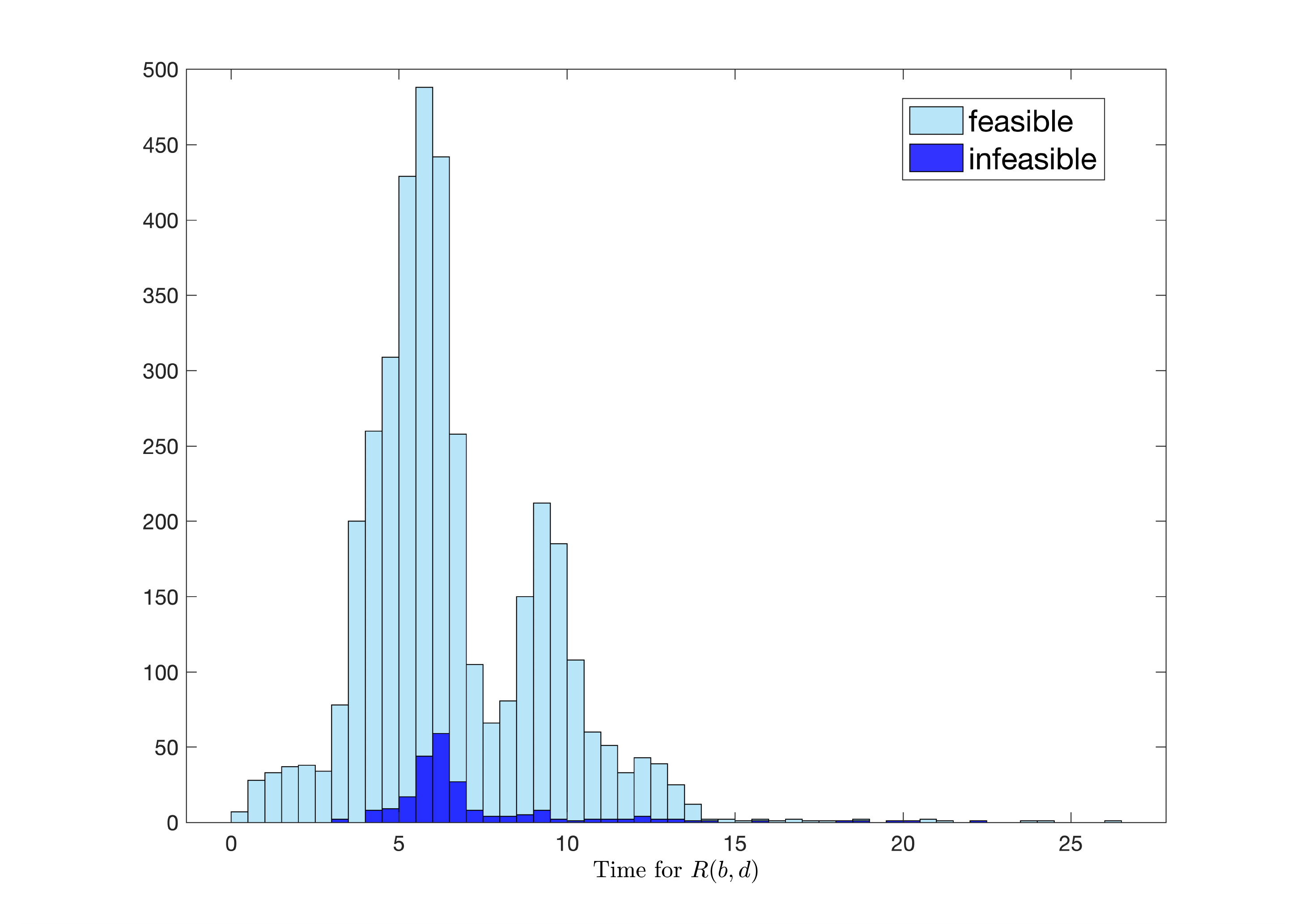}
			\caption{Occurrence frequencies of solution times that are under 30 seconds  }
			\label{fig:His}
		\end{minipage}
		\hfill
		\begin{minipage}[b]{0.5\textwidth}
			\includegraphics[scale=0.33]{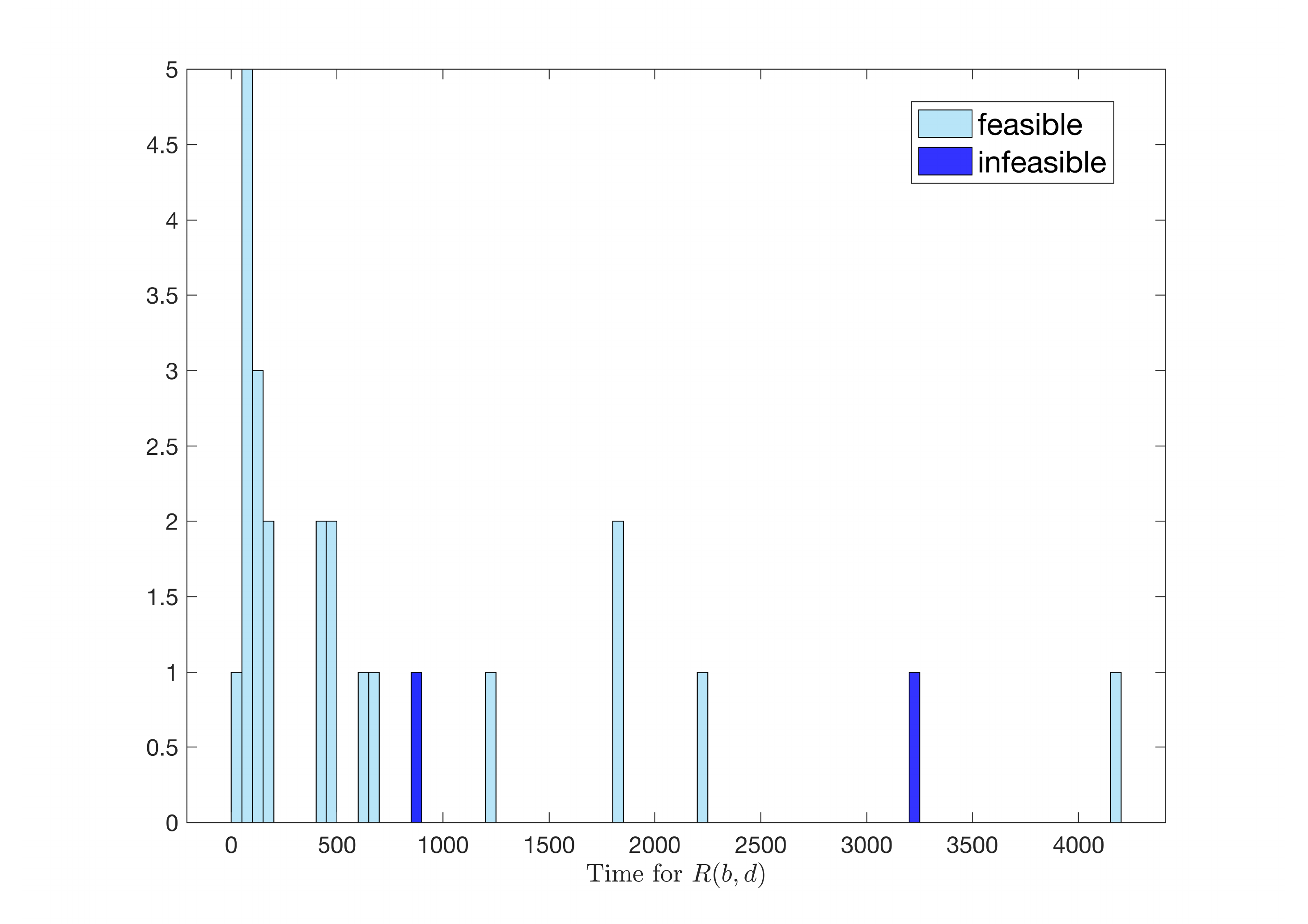}
			\caption{Occurrence frequencies of solution times that are over 30 seconds}
			\label{fig:His_out}
		\end{minipage} 		
	\end{figure} 
	
	To overcome this issue of extreme solution times, we modify CN and solve each \eqref{eq:Rbd} model under a time limit. If the model is aborted due to the time limit, we slightly modify the direction and solve the model with the new direction parameter. That is, we change line 4 of Algorithm 1 as follows\footnote{Note that the direction change is done only once.}: 
	
	\LinesNumberedHidden
	%\floatname{algorithm}{Procedure}
	\begin{procedure}[H] \label{Proc}
		\caption{Replacement of line 4 of Algorithm 1()}
		%		\addtocounter{algorithm}{-1}
		\DontPrintSemicolon
		Let  $b(s^{b}, p^{b}, t^{b})\in B$ and $d = (p^b_1-s^b_1, t^b_2-s^b_2)^T$, attempt to solve \eqref{eq:Rbd} \;
		\If{\eqref{eq:Rbd} could not be solved within the time limit}{
			$d^{b}_2=d^{b}_2-1$\;
			Solve \eqref{eq:Rbd}\;}
	\end{procedure}
	
	We refer to this extension of CN with time limited \eqref{eq:Rbd} models as TL-CN. We compare the performance of TL-CN (where a time limit of 50 seconds is used for each \eqref{eq:Rbd} model) with those of FN and CN in class C of AP. The results are presented in Table \ref{tab:timelimitCDN}, where we report the average values of each indicator over five instances. When we compare the number of integer programming problems solved by the algorithms, we observe that CN is the best algorithm and it is closely followed by TL-CN, as expected. When we analyse the run times and average \eqref{eq:Rbd} solution times of TL-CN and CN, we observe that there is a significant improvement when TL-CN is used, indicating that the extension is successful.
	
	\begin{table}[htbp]
		\centering
		\caption{Comparison of the FN, CN and TL-CN for class C of the set AP}
		\resizebox{\textwidth}{!}{	\begin{tabular}{ccccccccc}
				
				\multirow{2}[0]{*}{Algorithm} & \multirow{2}[0]{*}{\# IP} & \multirow{2}[0]{*}{Run Time} & \multirow{2}[0]{*}{\# \eqref{eq:Rbd}} & \multirow{2}[0]{*}{Time per \eqref{eq:Rbd} } & \multirow{2}[0]{*}{Time per ($P_i(x^b)$)} & \multirow{2}[0]{*}{$ C $} & \multirow{2}[0]{*}{$ C_2 $} & \multirow{2}[0]{*}{$ E $} \\
				&       &       &       &       &       &       &       &  \\		\hline
				FN    & 1895.00 & 5644.20 & 803.60 & 5.22  & 1.32  & 288.80 & 23.00 & 794.60 \\
				TL-CN & 1850.80 & 6978.42 & 826.00   & 6.89  & 1.25  & 211.60 & 12.40 & 803.40 \\
				CN    & 1839.40 & 11212.29 & 815.80 & 12.04 & 1.33  & 210.60 & 12.60 & 803.20 \\
		\end{tabular}}%
		\label{tab:timelimitCDN}
	\end{table}%

	We also run TL-CN with predetermined time limits for class C of AP and observe the quality of the solution set (using coverage error and hypervolume gap) by comparing it with FN and CN. The average values of the number of solutions found are: 380.2, 246.2, 367.8; the average coverage error values are: 571, 40, 33.8; the average scaled coverage error values are: 0.2254, 0.0157, 0.0134, and scaled hypervolume gap values are: $18.2876\times 10^{-3},0.8229\times 10^{-3}, 0.4315\times 10^{-3}$ for FN, CN and TL-CN, respectively.  It is seen that TL-CN outperforms CN and FN in terms of representativeness. 
	
	Overall, the results show that this modification (TL-CN) is successful in significantly reducing run time without sacrificing from performance in representativeness.

	In Algorithm 1, since solving \eqref{eq:Rbd} only guarantees finding a weakly nondominated point, we rely on two integer programming problems (\eqref{eq:P1} or \eqref{eq:P2}), and in some cases there is a possibility of solving both models, one of which may be redundant. An alternative strategy would be solving a model of the following form after  \eqref{eq:Rbd}: 
	\begin{equation}\label{eq:S}
	\tag{$S(x^b)$} \text{min}\{\, z_1(x)+z_2(x)\st x\in\mathcal{X},\ z_1(x)\leq z_1(x^{b})\, \ z_2(x)\leq z_2(x^{b})\    \} 
	\end{equation}
	In this case, a single point is returned at each iteration  and the two new boxes are formed taking this point into account. That is, we modify lines 8-17 of Algorithm 1 as follows:
	\begin{procedure}[H]
		\caption{Replacement of lines 8-17 of Algorithm 1()}
		%		\addtocounter{algorithm}{-1}
		\DontPrintSemicolon		
		Solve \eqref{eq:S}.  Let $ x^{1} $ be an optimal solution. \;
		$n^{1}=z(x^{1})$   \;
		$n^{2}=z(x^{1})$   \;	
	\end{procedure}
	
	We implement this strategy for CN and refer to the variant as CN-S. We compare this variant with CN for class A of KP and AP and report the results in Table 	\ref{tab:Extensions}. The table shows that in CN-S, the number of IPs solved is less, but this decrease is not reflected on the solution times. This is because the number of \eqref{eq:Rbd}  problems, which take more time to solve, increases in CN-S compared to CN. Note that in CN-S, each nondominated point is generated after solving a separate \eqref{eq:Rbd} problem, while in some iterations of CN ($ C_2  $ iterations) two nondominated points are found after solving only one \eqref{eq:Rbd}. The redundant models CN-S avoids are the second stage models, which are easier to solve than \eqref{eq:Rbd}, hence only account for a small fraction of the total solution time. That is why, we see that the extension CN-S may not guarantee better results in terms of solution time.
	
	One key design parameter of Algorithm 1 is the order of boxes. We use a simple strategy and add the new boxes to the end of the list $\mathcal{B}$ and hence implement a First-Generated- First-Explored rule. This way, without strictly imposing it, we encourage larger boxes to be explored first. This is the case especially in CN, as the boxes are split with respect to centrally located points. To see whether there are further benefits if the ordering would always be by box size, we implement another extension of CN, in which the boxes are explored starting from the largest. This variant is called CN-BO and the results on class A of KP and AP are given in Table \ref{tab:Extensions}. The results show that there is not a significant difference between CN and CN-BO, indicating that our box exploration strategy encourages checking larger boxes first. 
	
	\begin{table}[h]
		\centering
		\caption{Comparison of CN, CN-BO and CN-S for class A of sets KP and AP}
		\resizebox{0.9\textwidth}{!}{	\begin{tabular}{ccccccccc}
				\multirow{2}[0]{*}{Set} & \multirow{2}[0]{*}{Algorithm} & \multirow{2}[0]{*}{\# IP} & \multirow{2}[0]{*}{Run Time } & \multirow{2}[0]{*}{\# \eqref{eq:Rbd}} & \multirow{2}[0]{*}{Time \eqref{eq:Rbd}} & \multirow{2}[0]{*}{$ C $} & \multirow{2}[0]{*}{$ C_2 $} & \multirow{2}[0]{*}{$ E $} \\
				&       &       &       &       &       &       &       &  \\\hline
				\multirow{3}[0]{*}{KP} & CN    & 2398.60 & 894.72 & 1351.0 & 787.13 & 72.60 & 2.40  & 592.0 \\
				& CN-BO & 2404.00 & 889.05 & 1355.2 & 773.78 & 73.60 & 2.20  & 588.2 \\
				& CN-S  & 2352.40 & 928.20 & 1375.0 & 795.12 & -     & -     & 572.8 \\
				&       &       &       &       &       &       &       &  \\
				\multirow{3}[0]{*}{AP} & CN    & 1553.40 & 2187.07 & 712.0 & 1728.28 & 140.20 & 9.20  & 683.4 \\
				& CN-BO & 1552.60 & 2178.80 & 713.0 & 1730.36 & 137.40 & 8.20  & 684.4 \\
				& CN-S  & 1434.00 & 2191.11 & 723.6 & 1735.60 & -     & -     & 690.2 \\
		\end{tabular}}%
		\label{tab:Extensions}%
	\end{table}%

	\subsection{Comparison with existing algorithms} \label{sec:ComEx}
	We also provide comparisons with existing box-searching algorithms in the literature that are reported to perform well. We coded the box algorithm proposed by \cite{hamacher2007finding} (Algorithm BA), which searches boxes using an epsilon constraint type scalarization and the algorithm discussed e.g. in  \cite{Chalmet,Ladesma,leitner2016ilp} (Algorithm WS), which uses weighted sum scalarization with box defining constraints as follows: 
	\begin{equation} \label{eq:WS}
	\tag{$WS(b,w)$}
	\text{min}\{\, wz_1(x)+(1-w)z_2(x) \st x\in\mathcal{X}, \; z_1(x) \leq \ p_1^{b} - \epsilon, \; z_2(x) \leq \ t_2^{b}- \epsilon\,\}
	\end{equation}

	We implement the algorithms to solve the same set of AP and KP using the same computational environment. To have a fair comparison, we also used our box elimination rule in all {variants of the WS algorithm. However, in BA, since the corners of the boxes are not necessarily nondominated points, our elimination strategy can not be implemented directly. Therefore, we use the elimination rule given in the original BA algorithm from \cite{hamacher2007finding}.}
	
	It is possible to implement WS with different parameter choices, each time setting the weight parameter in a similar way to the direction parameter of the Pascoletti-Serafini scalarization. Therefore, we first investigate the performance of such WS algorithm variants, in which the weight parameter $ w $ is set as follows:
	\begin{center}
		FW: $ w $=0.5, CW: $ w=\frac{t^b_2-s^b_2}{p^b_1-s^b_1+t^b_2-s^b_2} $, NW:$ w=\frac{t^0_2-s^b_2}{p^0_1-s^b_1+t^0_2-s^b_2} $
		%NW:$ w=\frac{u^0_2-p^b_2}{u^0_1-t^b_1+u^0_2-p^b_2} $. 
		%CW: $ w=\frac{t^b_2-p^b_2}{p^b_1-t^b_1+t^b_2-p^b_2} $ 
	\end{center}
	The results reported in Table \ref{tab:WSvar} show that in KP, the performances of FW, CW and NW with respect to the number of IPs is similar, while setting the weight parameters with respect to the nadir point (NW) results in a considerable increase in solution time. In these instances, FW requires the minimum solution time, which is similar to the observations made for FN (implementation of Algorithm 1 with fixed direction). In AP, setting the weight parameters with respect to the nadir point (NW) may result in an increase in the number of IP models solved, yet this implementation is the fastest one for this set. Overall, there is no clear winner over all sets in terms of solution time, FW and NW being the fastest variants over KP and AP, respectively. CW, on the other hand, seems advantageous in terms of the number of IPs solved, observed especially in AP; and is the second and the third fastest algorithm in KP and AP, respectively. By comparing Tables~\ref{tab:MainKP},~\ref{tab:MainAP} and \ref{tab:WSvar}, we observe that for each class of instances, the fastest variant of the WS algorithm outperforms CN when the goal is to compute the entire set of nondominated points.
	
	\begin{table}[h]
		\centering
		\caption{Comparison of alternative implementations of WS}
		\resizebox{\textwidth}{!}{ 
			\begin{tabular}{clccc|ccccc}	
				\multicolumn{5}{c}{KP}                & \multicolumn{5}{c}{AP} \\
				\hline
				\multirow{2}[2]{*}{Class:N} & \multicolumn{1}{c}{\multirow{2}[2]{*}{Algorithm}} & \multirow{2}[2]{*}{\# IP} & \multirow{2}[2]{*}{Run Time} & \multirow{2}[2]{*}{$ E $} & \multirow{2}[2]{*}{Class:N} & \multirow{2}[2]{*}{Algorithm} & \multirow{2}[2]{*}{\# IP} & \multirow{2}[2]{*}{Run Time} & \multirow{2}[2]{*}{$ E $\footnote{The small difference in \textit{E} values between the variants is due to numerical rounding issues.}}\\
				&       &       &       &       &       &       &       &       &  \\
				\hline
				\multirow{3}[2]{*}{A:975.4} & CW    & 1387.00 & 604.58 & 564.80 & \multirow{3}[2]{*}{A:708.4} & CW    & 729.00 & 1575.41 & 688.80 \\
				& FW    & 1383.80 & 582.13 & 568.00 &       & FW    & 738.40 & 1441.38 & 679.40 \\
				& NW    & 1384.60 & 680.64 & 567.20 &       & NW    & 747.00 & 1384.71 & 670.80 \\
				\hline
				\multirow{3}[2]{*}{B:1539.4} & CW    & 2082.40 & 1347.38 & 997.40 & \multirow{3}[2]{*}{B:1416.2} & CW    & 1538.60 & 4335.83 & 1294.80 \\
				& FW    & 2081.40 & 1313.20 & 998.40 &       & FW    & 1555.20 & 3764.10 & 1278.20 \\
				& NW    & 2080.80 & 1593.80 & 999.00 &       & NW    & 1569.40 & 3565.28 & 1264.00 \\
				\hline
				\multirow{3}[2]{*}{C:2176.2} & CW    & 2827.80 & 2297.15 & 1525.60 & \multirow{3}[2]{*}{C:823.6} & CW    & 834.80 & 3846.29 & 813.40 \\
				& FW    & 2823.00 & 2189.65 & 1530.40 &       & FW    & 847.40 & 3621.01 & 800.80 \\
				& NW    & 2825.40 & 2955.11 & 1528.00 &       & NW    & 857.00 & 3516.93 & 791.20 \\
				\hline
				\multirow{3}[2]{*}{D:2791.8} & CW    & 3472.00 & 3603.22 & 2112.60 & \multirow{3}[2]{*}{D:1827} & CW    & 1910.00 & 12056.57 & 1745.00 \\
				& FW    & 3466.80 & 3375.21 & 2117.80 &       & FW    & 1935.00 & 10371.09 & 1720.00 \\
				& NW    & 3466.40 & 5120.93 & 2118.20 &       & NW    & 1964.80 & 9816.62 & 1690.20 \\
				\hline
		\end{tabular}}%
		\label{tab:WSvar}%
	\end{table}%
	
	\begin{table}[htbp]
		\centering
		\caption{Results of BA}
		\resizebox{0.8\textwidth}{!}{ 	
			\begin{tabular}{c|cccc|cccc}
				\hline
				& \multicolumn{4}{c|}{KP}       & \multicolumn{4}{c}{AP} \\
				\hline
				Class & $ N $     & Run Time & \# IP & $ E $     & $ N $     & Run Time & \# IP & $ E $ \\
				\hline
				A     & 975.4 & 389.44 & 2703.20 & 975.40 & 708.4 & 947.53 & 1446.80 & 708.40 \\
				B     & 1539.4 & 834.10 & 4080.40 & 1539.20 & 1416.2 & 2257.25 & 3012.00 & 1416.20 \\
				C     & 2176.2 & 1373.00 & 5420.00 & 2176.20 & 823.6 & 2523.18 & 1661.20 & 823.60 \\
				D     & 2791.8 & 2297.09 & 6575.20 & 2791.80 & 1827  & 6916.81 & 3763.20 & 1827.00 \\
				\hline	\end{tabular}}%
		\label{tab:Existingfull}%
	\end{table}%
	
	Table \ref{tab:Existingfull} shows the results of the experiments performed with BA. When the solution times required to find all the nondominated solutions are considered, it is observed that in KP, %the performances of the WS and the faster variant (FN) are comparable, while 
	BA outperforms all other algorithms. The differences are more significant in AP. Since BA solves two models for each scalarization (to guarantee obtaining a nondominated solution), the number of IPs is about twice that of WS variants and comparable to those of FN and CN.
	
	One of the main motivations to use the Pascoletti-Serafini scalarization is to obtain representative subsets in short time; hence we also compare the performance of the algorithms under time limit. We perform a new set of experiments to compare CN, the best variant in terms of representativeness, with {WS variants} and BA. %CW is chosen as the weight parameter is set using a similar strategy as in CN, which would allow a fair comparison.
	
	For each set of instances, we perform runs setting two time limits: T\textsubscript{1} and T\textsubscript{2}. We set the time limits T\textsubscript{1} and T\textsubscript{2} such that they are a quarter of and half of the average solution time of the fastest algorithm (BA), respectively \footnote{Specifically, T\textsubscript{1} is set as 97.5, 208.5, 343 and 574 second in classes A, B, C and D of KP, respectively.  T\textsubscript{1} is set as 237, 565, 632 and 1729 seconds in classes A, B, C and D of AP, respectively. T\textsubscript{2}=2$\times$T\textsubscript{1} for each class.}. 
	% half of the average solution time of the fastest algorithm (BA). 
	
	{First, we compare the performances of variants of WS algorithm in terms of representativeness. Table~\ref{tab:WSlim} shows the average values of the number of solutions found, coverage error, scaled coverage error and scaled hypervolume gap for CW, FW and NW. Accordingly, CW, in which the weight parameter is set using a similar strategy as in CN, outperforms the others in terms of representativeness. Hence, only CW is used for the overall comparison. Note that empty boxes (hence infeasible models) are encountered towards the later iterations of the algorithm CW, compared to the other variants. As a result, CW returns more solutions than the other variants when implemented under time limit, which leads to better performance in representativeness.  }
	
	In Table \ref{tab:Existinglim}, we report the average values of the number of solutions found, coverage error, scaled coverage error and scaled hypervolume gap for CN, CW and BA. We observe that the performances of CN and CW are comparable, while BA performs significantly worse than the other two in terms of coverage. Moreover, this good performance of CN is obtained with a significantly less number of solutions compared to CW and BA (nearly half in most cases). In that sense, one can see that our algorithm is competitive as it ensures comparable level of coverage as CW with a subset of much smaller size and better coverage than BA, again with a much smaller set of solutions, which is desirable (\cite{Sayin2000}). We also observe that scaled coverage error and the scaled hypervolume gap do not provide the same order, see e.g. AP(T\textsubscript{1}) class C results, where SCE values for CN and BA are 0.0253 and 0.0303, indicating better performance for CN, while the SHG values are 2.4494 ($ \times 10^{-3} $) and 1.4997 ($ \times 10^{-3} $), indicating the opposite order. When representativess is measured in terms of SHG, we observe that CN performs worst in some sets, which is mainly due to it returning a small number of solutions. 
	
	The results indicate that when the algorithms are implemented so as to find a subset of fixed cardinality, CN may present a more representative set. This is because, unlike Algorithm 1, the WS algorithm (and hence CW) without the box constraints fails to generate unsupported points. It can only reach  unsupported points by making them locally supported, iteratively. Hence when the centrally located point(s) of a box are unsupported, they may not be generated in the early iterations. This can be illustrated in Figure \ref{fig:dispro}, where implementing CW  would return point 1, which is clearly less representative of the set of solutions in the box compared to point 2, the point returned by CN. Algorithm 1 is able to return centrally located points whether they are supported or unsupported, which gives CN a clear advantage in terms of representativeness under fixed cardinality. This can also be seen in Table \ref{tab:ExistinglimFixedN}, in which we show the representativeness measures by fixing the cardinality of the solution sets of CW and BA to the one returned by CN.
	%outperforms WS since it returns a subset of \Ozl{much smaller}
	% similar 
	% size but with comparable
	%much better 
	% representativeness and it outperforms BA by ensuring a comparable level of coverage with a much smaller subset of solutions, which is desirable (\cite{Sayin2000}). 

	%\begin{table}[h]
	%	\centering
	%	\caption{Results of the existing algorithms by \cite{Ladesma} and \cite{hamacher2007finding}}
	%	\resizebox{0.8\textwidth}{!}{ 
	%		\begin{tabular}{ccc|ccc|ccc}\hline	
	%			& 	&       & \multicolumn{3}{c|}{WS} & \multicolumn{3}{c}{BA} \\
	%			Problem Type	&	Class & $ N $     & Run Time & \# IP & E     & Run Time & \# IP & E \\\hline
	%			\multirow{4}[0]{*}{KP}&	A     & 975.4 & 604.58 & 1387.00 & 564.80  & 389.44 & 2703.20 & 975.40 \\
	%			&	B     & 1539.4 & 1347.38 & 2082.40 & 997.40  & 834.10 & 4080.40 & 1539.20 \\
	%			&	C     & 2176.2& 2297.15 & 2827.80 & 1525.60  & 1373.00 & 5420.00 & 2176.20 \\
	%			&	D     & 2791.8& 3603.22 & 3472.00 & 2112.60 & 2297.09 & 6575.20 & 2791.80 \\\hline
	%			\multirow{4}[0]{*}{AP}	 &	A     & 708.4 & 1575.41 & 729.00 & 688.80  & 947.53 & 1446.80 & 708.40 \\
	%			&	B     & 1416.2 & 4335.83 & 1538.60 & 1294.80 & 2257.25 & 3012.00 & 1416.20 \\
	%			&	C     & 823.6& 3846.29 & 834.80 & 813.40  & 2523.18 & 1661.20 & 823.60 \\
	%			&	D     & 1827  & 12056.57 & 1910.00 & 1745.00  & 6916.81 & 3763.20 & 1827.00 \\\hline			
	%		\end{tabular}}%
	%		\label{tab:Existingfull}%
	%	\end{table}%

	% Table generated by Excel2LaTeX from sheet 'NewTable'
	\begin{table}[htbp]
		\centering
		\caption{{Representativeness results of the alternative implementations of WS under time limit}}
		\resizebox{\textwidth}{!}{ 
			\begin{tabular}{cc|cccc|cccc|cccc}
				\hline
				&       & \multicolumn{4}{c|}{CW}        & \multicolumn{4}{c|}{FW}        & \multicolumn{4}{c}{NW} \\
				Problem Type & Class & $ \bar{{N}} $ & CE & SCE & SHG$\times 10^3$ & $ \bar{{N}} $ & CE & SCE & SHG$\times 10^3$ & $ \bar{{N}} $ & CE & SCE & SHG$\times 10^3$ \\ \hline
				\multicolumn{1}{c}{\multirow{4}[0]{*}{AP (T\textsubscript{1})}} & A     & 171.4 & 56.6  & 0.0214 & 1.3010 & 94.6  & 1098.6 & 0.4165 & 165.4009 & 110.2 & 1081.8 & 0.4102 & 163.2202 \\
				& B     & 328   & 129   & 0.0163 & 0.4655 & 158   & 3486.4 & 0.4403 & 131.0424 & 167   & 3470.8 & 0.4383 & 130.2929 \\
				& C     & 203.4 & 51.4  & 0.0204 & 1.4052 & 116.8 & 1031.8 & 0.4074 & 186.0266 & 136.8 & 1011.6 & 0.3993 & 181.2243 \\
				& D     & 465.2 & 123.6 & 0.0127 & 0.3070 & 206.2 & 4299.6 & 0.4417 & 129.8451 & 243.4 & 4290.6 & 0.4408 & 126.3918 \\ \hline
				\multicolumn{1}{c}{\multirow{4}[0]{*}{AP (T\textsubscript{2})}} & A     & 287.8 & 45.6  & 0.0172 & 0.6642 & 192   & 1065.8 & 0.4040 & 133.0541 & 218.6 & 1032.2 & 0.3914 & 123.9464 \\
				& B     & 540   & 96.8  & 0.0122 & 0.2496 & 326   & 3402.4 & 0.4297 & 107.5144 & 356.2 & 3368.8 & 0.4255 & 103.1515 \\
				& C     & 348.4 & 34.6  & 0.0137 & 0.7168 & 240.2 & 984.6 & 0.3887 & 144.3268 & 275.6 & 955.2 & 0.3770 & 132.3241 \\
				& D     & 766.4 & 84    & 0.0087 & 0.1590 & 446.4 & 4203.6 & 0.4318 & 105.2014 & 529.8 & 4163.0 & 0.4277 & 97.0106 \\ \hline
				\multicolumn{1}{c}{\multirow{4}[0]{*}{KP (T\textsubscript{1})}} & A     & 358.2 & 44.2  & 0.0128 & 1.6565 & 159.2 & 1135.6 & 0.3273 & 249.0571 & 128.8 & 1145.4 & 0.3302 & 279.1742 \\
				& B     & 547.8 & 46.4  & 0.0098 & 1.2366 & 240.8 & 1534.8 & 0.3257 & 257.4887 & 182.8 & 1538.2 & 0.3266 & 287.8895 \\
				& C     & 760.8 & 49.2  & 0.0082 & 0.9845 & 395.6 & 1977  & 0.3279 & 247.1719 & 284.4 & 1994.6 & 0.3308 & 278.7227 \\
				& D     & 983.2 & 51.8  & 0.0073 & 0.7693 & 567.4 & 2334.8 & 0.3264 & 240.3115 & 342.8 & 2413.2 & 0.3375 & 281.7669 \\ \hline
				\multicolumn{1}{c}{\multirow{4}[0]{*}{KP (T\textsubscript{2})}} & A     & 531   & 37.2  & 0.0108 & 0.8544 & 293   & 1040.2 & 0.2993 & 188.8417 & 246.8 & 1055.2 & 0.3037 & 215.4790 \\
				& B     & 838.4 & 39.6  & 0.0084 & 0.5830 & 460.8 & 1396.2 & 0.2960 & 187.5788 & 356.2 & 1427.0 & 0.3025 & 223.4901 \\
				& C     & 1154.2 & 36.4  & 0.0061 & 0.5068 & 733.8 & 1792  & 0.2969 & 173.8502 & 514.2 & 1880.4 & 0.3118 & 222.0448 \\
				& D     & 1514.4 & 40.6  & 0.0057 & 0.4011 & 1093.2 & 2060  & 0.2879 & 155.1007 & 640.8 & 2264.8 & 0.3168 & 227.6428 \\ \hline
		\end{tabular}}%
		\label{tab:WSlim}%
	\end{table}%

	\begin{table}[!h]
		\centering
		\caption{Representativeness results of the existing algorithms under time limit}
		\resizebox{\textwidth}{!}{ 
			\begin{tabular}{cc|cccc|cccc|cccc} \hline
				&       & \multicolumn{4}{c|}{CN}        & \multicolumn{4}{c|}{CW}        & \multicolumn{4}{c}{BA} \\
				Problem Type & Class & $ \bar{{N}} $ & CE & SCE & SHG$\times 10^3$ & $ \bar{{N}} $ & CE & SCE & SHG$\times 10^3$ & $ \bar{{N}} $ & CE & SCE & SHG$\times 10^3$ \\ \hline
				\multicolumn{1}{c}{\multirow{4}[0]{*}{AP (T\textsubscript{1})}} & A     & 95.8  & 80.8  & 0.0306 & 2.0348 & 171.4 & 56.6  & 0.0214 & 1.3010 & 186.2 & 189.2 & 0.0718 & 1.8700 \\
				& B     & 209.2 & 156   & 0.0197 & 0.5574 & 328   & 129   & 0.0163 & 0.4655 & 388   & 473.2 & 0.0596 & 0.4790 \\
				& C     & 100.8 & 64    & 0.0253 & 2.4494 & 203.4 & 51.4  & 0.0204 & 1.4052 & 217.8 & 76.8  & 0.0303 & 1.4997 \\
				& D     & 283.4 & 135.2 & 0.0139 & 0.3622 & 465.2 & 123.6 & 0.0127 & 0.3070 & 493.2 & 505.4 & 0.0519 & 0.4531 \\ \hline
				\multicolumn{1}{c}{\multirow{4}[0]{*}{AP (T\textsubscript{2})}} & A     & 190.6 & 46    & 0.0175 & 0.8295 & 287.8 & 45.6  & 0.0172 & 0.6642 & 363.8 & 48.2  & 0.0183 & 0.3556 \\
				& B     & 405.2 & 104   & 0.0132 & 0.2242 & 540   & 96.8  & 0.0122 & 0.2496 & 737   & 226.6 & 0.0285 & 0.1606 \\
				& C     & 166.6 & 45.6  & 0.0180 & 1.3423 & 348.4 & 34.6  & 0.0137 & 0.7168 & 417.2 & 33    & 0.0130 & 0.3609 \\
				& D     & 513.8 & 69.8  & 0.0072 & 0.1466 & 766.4 & 84    & 0.0087 & 0.1590 & 932.8 & 146.6 & 0.0150 & 0.0938 \\ \hline
				\multicolumn{1}{c}{\multirow{4}[0]{*}{KP (T\textsubscript{1})}} & A     & 225.6 & 45.8  & 0.0135 & 1.5178 & 358.2 & 44.2  & 0.0128 & 1.6565 & 353.2 & 72.4  & 0.0210 & 0.8235 \\
				& B     & 309.6 & 39.6  & 0.0082 & 1.2757 & 547.8 & 46.4  & 0.0098 & 1.2366 & 564.2 & 57.2  & 0.0123 & 0.5105 \\
				& C     & 325.8 & 51.4  & 0.0085 & 1.2748 & 760.8 & 49.2  & 0.0082 & 0.9845 & 739.4 & 64.6  & 0.0107 & 0.4238 \\
				& D     & 376.6 & 58    & 0.0081 & 1.1317 & 983.2 & 51.8  & 0.0073 & 0.7693 & 1007.6 & 72.8  & 0.0102 & 0.2993 \\ \hline
				\multicolumn{1}{c}{\multirow{4}[0]{*}{KP (T\textsubscript{2})}} & A     & 423.2 & 22.2  & 0.0064 & 0.5189 & 531   & 37.2  & 0.0108 & 0.8544 & 637.2 & 39.4  & 0.0116 & 0.1881 \\
				& B     & 643.8 & 23.6  & 0.0048 & 0.4171 & 838.4 & 39.6  & 0.0084 & 0.5830 & 1019.4 & 27.2  & 0.0059 & 0.1182 \\
				& C     & 650.6 & 21.4  & 0.0036 & 0.4839 & 1154.2 & 36.4  & 0.0061 & 0.5068 & 1412.6 & 30    & 0.0050 & 0.1034 \\
				& D     & 758.2 & 25.6  & 0.0036 & 0.4485 & 1514.4 & 40.6  & 0.0057 & 0.4011 & 1804.8 & 25.2  & 0.0035 & 0.0765 \\ \hline
		\end{tabular}}%
		\label{tab:Existinglim}%
	\end{table}%

	% Table generated by Excel2LaTeX from sheet 'summary (2)'
	\begin{table}[!h]
		\centering
		\caption{Representativeness results of the existing algorithms for fixed $\bar{N}$}
		\resizebox{0.8\textwidth}{!}{ 
			\begin{tabular}{cc|c|ccc|ccc} \hline
				&       &       & \multicolumn{3}{c|}{CW} & \multicolumn{3}{c}{BA} \\
				Problem Type & Class & $\bar{N}$ & CE & SCE & SHG$\times 10^3$ & CE & SCE & SHG$\times 10^3$ \\\hline
				\multicolumn{1}{c}{\multirow{4}[0]{*}{AP (T\textsubscript{1})}} & A     & 95.8  & 88    & 0.0335 & 2.4701 & 320.8 & 0.1219 & 4.2239 \\
				& B     & 209.2 & 163.8 & 0.0207 & 0.7288 & 841   & 0.1058 & 1.4844 \\
				& C     & 100.8 & 79.6  & 0.0315 & 2.8475 & 223.4 & 0.0883 & 4.2464 \\
				& D     & 283.4 & 153.2 & 0.0157 & 0.5073 & 878.4 & 0.0902 & 1.0034 \\\hline
				\multicolumn{1}{c}{\multirow{4}[0]{*}{AP (T\textsubscript{2})}} & A     & 190.6 & 56.6  & 0.0214 & 1.1732 & 154.6 & 0.0591 & 1.7069 \\
				& B     & 405.2 & 129   & 0.0163 & 0.3622 & 473.2 & 0.0596 & 0.4609 \\
				& C     & 166.6 & 66    & 0.0262 & 1.7169 & 131   & 0.0518 & 2.5594 \\
				& D     & 513.8 & 91.4  & 0.0094 & 0.2694 & 505.4 & 0.0519 & 0.4503 \\\hline
				\multicolumn{1}{c}{\multirow{4}[0]{*}{KP (T\textsubscript{1})}} & A     & 225.6 & 51.2  & 0.0147 & 2.9080 & 82.4  & 0.0241 & 1.7554 \\
				& B     & 309.6 & 67.8  & 0.0144 & 2.4493 & 103.2 & 0.0218 & 1.3959 \\
				& C     & 325.8 & 74.6  & 0.0124 & 2.2843 & 133.8 & 0.0222 & 1.3735 \\
				& D     & 376.6 & 70.2  & 0.0099 & 1.9943 & 174.2 & 0.0244 & 1.2145 \\\hline
				\multicolumn{1}{c}{\multirow{4}[0]{*}{KP (T\textsubscript{2})}} & A     & 423.2 & 39.4  & 0.0114 & 1.2623 & 54.6  & 0.0158 & 0.5883 \\
				& B     & 643.8 & 46.4  & 0.0098 & 1.0227 & 66.6  & 0.0141 & 0.4626 \\
				& C     & 650.6 & 55.8  & 0.0092 & 1.2001 & 75.4  & 0.0125 & 0.5125 \\
				& D     & 758.2 & 55    & 0.0077 & 1.0475 & 110.8 & 0.0155 & 0.4763 \\ \hline
		\end{tabular}}%
		\label{tab:ExistinglimFixedN}%
	\end{table}%

	There is also a recent algorithm, the balanced box (BB) algorithm (\cite{Boland}), which can be seen as an extension of \cite{hamacher2007finding}.  Since this algorithm is originally coded using a different language and involves various enhancements, it is difficult to replicate the results in our computational environment. Therefore, we do not fully compare the solution times with those of the balanced box algorithm. We, however, can comment on the number of (mixed) integer programming problems solved. The balanced box algorithm solves exactly $3N$ problems. Therefore, it will solve more models for all of the problem instances considered; indeed it solves 25.5\%, 36.5\% more problems than our best algorithm variant on average for KP and AP, respectively. Moreover, on a different computer on which we could run the code of BB algorithm provided by one of the authors of \cite{Boland} and our algorithms, we 
	made comparisons between BB and CN when both are implemented under time limit. Specifically, for each instance in class B and D of sets KP and AP, we first run BB algorithm  until a fixed number of nondominated points are found (100, 200 and 300 points, respectively) and record the solution time. We then run CN under the same time limit and report detailed results on the number of nondominated points found, the coverage error values (CE and SCE) and the scaled hypervolume gap (SHG) for both algorithms in Tables \ref{tab:BBTypeBcov} and \ref{tab:BBTypeDcov}, for class B and D instances, respectively (See Appendix). The results indicate better performance of the CN algorithm with respect to representativeness. In almost all instances, CN provides a solution set with better coverage and hypervolume gap, while in a few instances the coverage of the solution set of BB is better. We use Figure \ref{fig:TimeEffect} to summarize these results and demonstrate how the quality of approximation evolves over time. The figure shows the change in SCE and SHG values through time for BB and CN. We see that CN provides better representativeness under all three time limits and that the difference reduces as time limit increases.
	
	%	\begin{figure}[h] 	
%		\centering
%		\includegraphics[scale=0.45]{initialbox.pdf}
%		\caption{Initial box}
	%	\label{fig:TimeEffect}
%	\end{figure}
	
		\begin{figure}[h]						
				\centering  
	\includegraphics[scale=0.7]{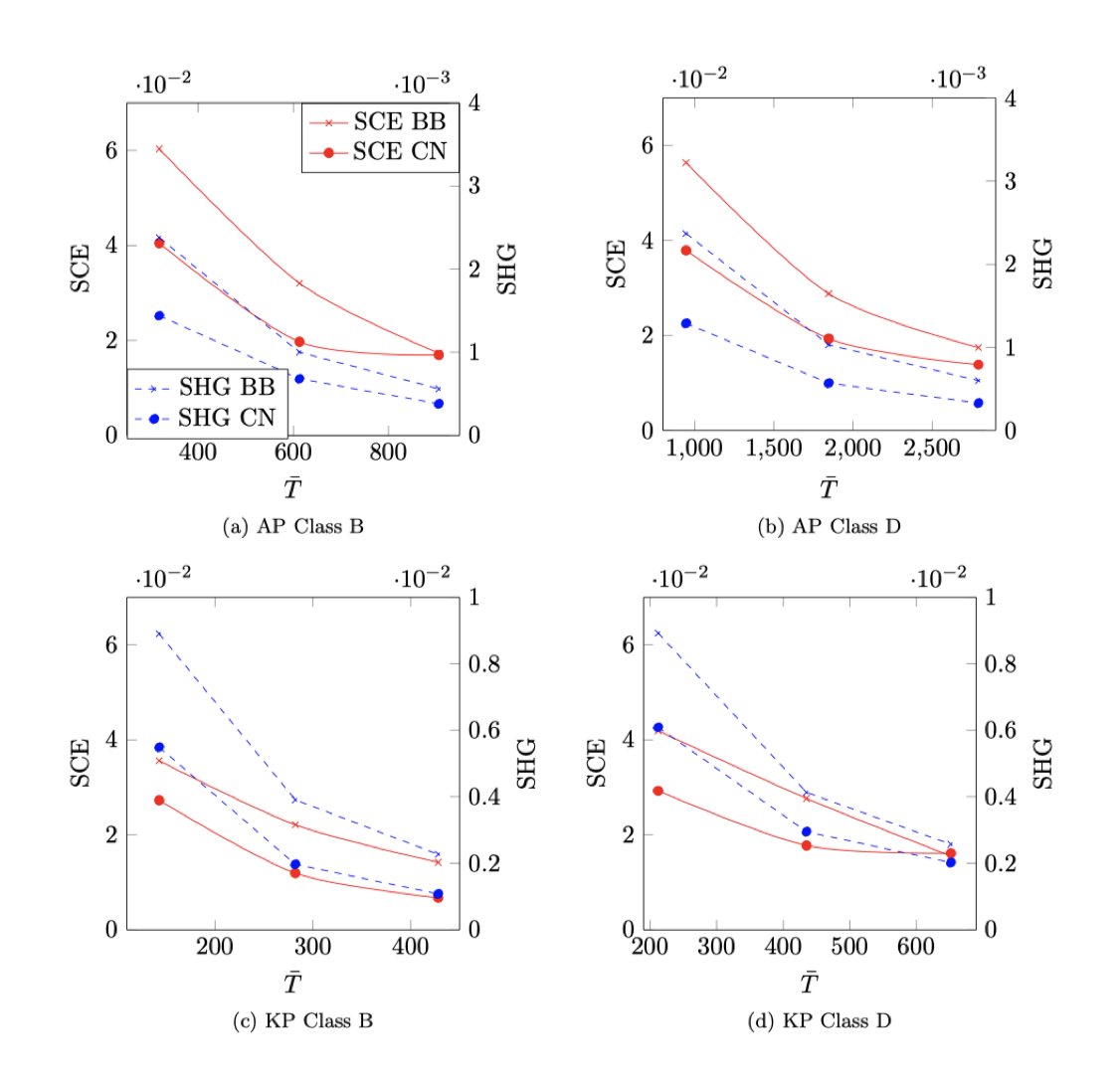}
		\caption{Quality of the approximation over time. ($\bar{T}$: Time limit)}
		\label{fig:TimeEffect}
		\end{figure}

	%\newpage
	\section{Conclusion} \label{sec:Conc}
	
	We propose an exact solution approach for biobjective integer programming problems based on  solving Pascoletti-Serafini scalarizations to search for nondominated points within boxes in the objective space. We implement different variations of the algorithm based on how the boxes are defined and how the direction vector in the scalarization problem is set.  We prove that the algorithm terminates and provide lower and upper bounds on the number of scalarization models solved.
	
	We compare the performances of alternative implementations of the algorithm under different parameter choices and box splitting strategies, both with and without time limits. Our results indicate that using nondominated points to define the boxes is a better strategy. Moreover, although using a fixed direction vector of  $(1,1)^T$ leads to more (mixed) integer programming problems solved, it requires less computational time since less of the more difficult scalarization models are solved. We, however, observe that setting direction with respect to the diagonal of the box to be searched is still promising since it returns a highly representative subset (measured using coverage error and hypervolume gap) of the set of nondominated points when it is run with a time limit. This good performance in representativeness is also verified by comparisons with existing algorithms.
	
	Future research can focus on further comparisons of the discussed approaches on different sets of problems as well as exploration of different versions of Algorithm 1, one example being the implementation of the augmented formulation of the scalarization problem instead of solving second stage models. 
	
	\bibliographystyle{plain} % outcomment this and next line in Case 1
	\bibliography{whole} % if more than one, comma separated

	\newpage
	\section*{Appendix A: Detailed results on quality of approximation}

	\begin{table}[h!]
		\centering
		\caption{Representativeness results of BB and CN under time limit for class B instances}
		\resizebox{\textwidth}{!}{
			\begin{tabular}{c|cc|cccc|cccc|cc|cccc|cccc}
				\hline
				& \multicolumn{10}{c|}{KP}                                                      & \multicolumn{10}{c}{AP} \\
				&       & \multicolumn{1}{c}{} & \multicolumn{4}{c}{BB}        & \multicolumn{4}{c|}{CN}       &       & \multicolumn{1}{c}{} & \multicolumn{4}{c}{BB}        & \multicolumn{4}{c}{CN} \\\hline
				&  &    &   &   &  &  &     &    &  &   &   &   &    &   &  & 
				&   &    & & 
				\\
				Ins   & $ N $     & Time  & $  \bar{N} $     & CE    & SCE   & SHG$\times 10^3$  & $  \bar{N} $     & CE   & SCE   & SHG$\times 10^3$   & $ N $     & Time  & $  \bar{N} $     & CE    & SCE   & SHG$\times 10^3$  & $  \bar{N} $     & CE    & SCE   & SHG$\times 10^3$ \\\hline	
				&  &    &   &   &  &  &     &    &  &   &   &   &    &   &  & 
				&   &    & & 
				\\
				1     & 1416  & 135   & 100   & 186   & 0.0433 & 8.6553 & 97    & 124   & 0.0289 & 4.8684 & 1508  & 321   & 100   & 461   & 0.0567 & 2.4728 & 94    & 368   & 0.0452 & 1.4702 \\
				2     & 1793  & 167   & 100   & 217   & 0.0405 & 9.0773  & 78    & 149   & 0.0278 & 6.4003 & 1362  & 317   & 100   & 485   & 0.0631 & 2.3072 & 96    & 275   & 0.0358 & 1.3666 \\
				3     & 1539  & 164   & 100   & 144   & 0.0300 & 9.5240 & 96    & 130   & 0.0271 & 5.3799 & 1383  & 315   & 100   & 576   & 0.0724 & 2.2801 & 94    & 391   & 0.0492 & 1.3933 \\
				4     & 1338  & 127   & 100   & 127   & 0.0282 & 8.5474 & 96    & 144   & 0.0320 & 5.0485 & 1456  & 325   & 100   & 462   & 0.0596 & 2.5565 & 93    & 288   & 0.0371 & 1.5630 \\
				5     & 1611  & 123   & 100   & 165   & 0.0359 & 8.9997 & 87    & 94    & 0.0205 & 5.7980 & 1372  & 313   & 100   & 401   & 0.0497 & 2.3082 & 93    & 283   & 0.0351 & 1.4088 \\
				&       &       &       &       &       &       &       &       &       &       &       &       &       &       &       &       &       &       &       &  \\				
				1     & 1416  & 261   & 200   & 134   & 0.0312 & 3.7353 & 258   & 42    & 0.0098 & 1.2703 & 1508  & 614   & 200   & 199   & 0.0245 & 1.0482 & 183   & 183   & 0.0225 & 0.6781 \\
				2     & 1793  & 353   & 200   & 150   & 0.028 & 4.0634 & 197   & 92    & 0.0172 & 2.3858 & 1362  & 615   & 200   & 296   & 0.0385 & 0.9560 & 182   & 141   & 0.0184 & 0.6460 \\
				3     & 1539  & 333   & 200   & 86    & 0.0179 & 4.2037 & 235   & 45    & 0.0094 & 1.7662 & 1383  & 612   & 200   & 313   & 0.0394 & 0.9515 & 177   & 131   & 0.0165 & 0.6591 \\
				4     & 1338  & 243   & 200   & 68    & 0.0151 & 3.6248 & 215   & 52    & 0.0115 & 1.8965 & 1456  & 624   & 200   & 213   & 0.0275 & 1.0827 & 177   & 164   & 0.0211 & 0.7440 \\
				5     & 1611  & 222   & 200   & 84    & 0.0183 & 3.9165 & 179   & 54    & 0.0118 & 2.5501 & 1372  & 601   & 200   & 244   & 0.0303 & 0.9612 & 179   & 161   & 0.02  & 0.6586 \\
				&       &       &       &       &       &       &       &       &       &       &       &       &       &       &       &       &       &       &       &  \\
				1     & 1416  & 377   & 300   & 60    & 0.0140 & 2.1089 & 370   & 42    & 0.0098 & 0.8973 & 1508  & 898   & 300   & 161   & 0.0198 & 0.5926 & 261   & 106   & 0.0130 & 0.3810 \\
				2     & 1793  & 537   & 300   & 114   & 0.0213 & 2.4171 & 320   & 54    & 0.0101 & 1.2813 & 1362  & 916   & 300   & 141   & 0.0184 & 0.5302 & 273   & 108   & 0.0141 & 0.3395 \\
				3     & 1539  & 493   & 300   & 49    & 0.0102 & 2.4582 & 366   & 21    & 0.0044 & 1.0527 & 1383  & 909   & 300   & 126   & 0.0158 & 0.5318 & 251   & 131   & 0.0165 & 0.3707 \\
				4     & 1338  & 373   & 300   & 57    & 0.0127 & 2.1271 & 359   & 21    & 0.0047 & 0.9828 & 1456  & 919   & 300   & 98    & 0.0126 & 0.6115 & 252   & 164   & 0.0211 & 0.4214 \\
				5     & 1611  & 358   & 300   & 59    & 0.0128 & 2.2552 & 318   & 22    & 0.0048 & 1.2060 & 1372  & 890   & 300   & 161   & 0.0200 & 0.5405 & 252   & 161   & 0.0200 & 0.3803 \\
				\hline
		\end{tabular}}%
		\label{tab:BBTypeBcov}%
	\end{table}%
	
	% Table generated by Excel2LaTeX from sheet 'Revizyon'
	\begin{table}[h!]
		\centering
		\caption{Representativeness results of BB and CN under time limit for class D instances}
		\resizebox{\textwidth}{!}{
			\begin{tabular}{c|cc|cccc|cccc|cc|cccc|cccc}
				\hline
				& \multicolumn{10}{c|}{KP}                                                      & \multicolumn{10}{c}{AP} \\
				&       & \multicolumn{1}{c}{} & \multicolumn{4}{c}{BB}        & \multicolumn{4}{c|}{CN}       &       & \multicolumn{1}{c}{} & \multicolumn{4}{c}{BB}        & \multicolumn{4}{c}{CN} \\
				\hline
				&  &    &   &   &  &  &     &    &  &   &   &   &    &   &  & 
				&   &    & & 
				\\
				Ins   & $ N $     & Time  & $  \bar{N} $     & CE    & SCE   & SHG$\times 10^3$   & $  \bar{N} $    & CE    & SCE   & SHG $\times 10^3$  & $ N $     & Time  &$  \bar{N} $     & CE    & SCE   & SHG  $\times 10^3$ &$  \bar{N} $     & CE    & SCE  & SHG $\times 10^3$ \\\hline
				&  &    &   &   &  &  &     &    &  &   &   &   &    &   &  & 
				&   &    & & 
				\\
				1     & 3030  & 209   & 100   & 286   & 0.0396 & 9.3300 & 74    & 218   & 0.0301 & 6.6541  & 1746  & 904   & 100   & 528   & 0.0532 & 2.2107
				& 103   & 362   & 0.0365 & 1.2164
				\\
				2     & 2836  & 230   & 100   & 274   & 0.0397 & 9.2092 & 74    & 218   & 0.0316 & 6.6024 & 1889  & 939   & 100   & 550   & 0.0564 & 2.5445		 & 104   & 412   & 0.0422 & 1.3476
				
				\\
				3     & 2920  & 216   & 100   & 339   & 0.0442 & 8.9635 & 87    & 206   & 0.0269 & 5.7109 & 1834  & 938   & 100   & 556   & 0.0576 & 2.3210		 & 103   & 352   & 0.0365 & 1.2662
				
				\\
				4     & 2686  & 205   & 100   & 271   & 0.0385 & 8.6333 & 67    & 214   & 0.0304 & 6.5728 & 1839  & 952   & 100   & 541   & 0.0558 & 2.3854		 & 96    & 383   & 0.0395 & 1.4014
				
				\\
				5     & 2487  & 199   & 100   & 329   & 0.0475 & 8.4906 & 94    & 188   & 0.0272 & 4.9580 & 1827  & 997   & 100   & 569   & 0.0589 & 2.4342 & 106   & 337   & 0.0349 & 1.2618  \\
				&       &       &       &       &       &       &       &       &       &       &       &       &       &       &       &       &       &       &       &  \\
				1     & 3030  & 428   & 200   & 189   & 0.0261 & 4.3571 & 139   & 133   & 0.0184 & 3.2987 & 1746  & 1783  & 200   & 299   & 0.0301 & 0.9477
				& 203   & 185   & 0.0186 & 0.5560
				
				\\
				2     & 2836  & 445   & 200   & 196   & 0.0284 & 4.2477 & 132   & 123   & 0.0178 & 3.3739 & 1889  & 1836  & 200   & 273   & 0.028 & 1.1116		 & 204   & 207   & 0.0212 & 0.6304
				
				\\
				3     & 2920  & 467   & 200   & 211   & 0.0275 & 4.1654 & 172   & 118   & 0.0154 & 2.7265 & 1834  & 1827  & 200   & 280   & 0.029 & 1.0027		 & 213   & 181   & 0.0188 & 0.5409
				
				\\
				4     & 2686  & 424   & 200   & 190   & 0.027 & 4.0136 & 145   & 148   & 0.0211 & 3.0111 & 1839  & 1858  & 200   & 269   & 0.0278 & 1.0353		 & 204   & 178   & 0.0184 & 0.5920
				
				\\
				5     & 2487  & 411   & 200   & 202   & 0.0292 & 3.8847 & 189   & 111   & 0.016 & 2.3157 & 1827  & 1924  & 200   & 282   & 0.0292 & 1.0571 & 212   & 190   & 0.0197 & 0.5697 \\
				&       &       &       &       &       &       &       &       &       &       &       &       &       &       &       &       &       &       &       &  \\
				
				1     & 3030  & 664   & 300   & 114   & 0.0158 & 2.7535 & 230   & 133   & 0.0184 & 1.9805 & 1746  & 2905  & 300   & 198   & 0.02  & 0.5555
				& 315   & 166   & 0.0167 & 0.3083
				\\
				2     & 2836  & 689   & 300   & 91    & 0.0132 & 2.6506 & 205   & 123   & 0.0178 & 2.3463
				& 1889  & 2725  & 300   & 160   & 0.0164 & 0.6500		 & 309   & 122   & 0.0125 & 0.3653
				
				\\
				3     & 2920  & 643   & 300   & 115   & 0.015 & 2.5678 & 221   & 118   & 0.0154 & 2.0427 & 1834  & 2695  & 300   & 167   & 0.0173 & 0.5804		 & 323   & 136   & 0.0141 & 0.3155
				\\
				4     & 2686  & 647   & 300   & 93    & 0.0132 & 2.4708 & 191   & 148   & 0.0211 & 2.3945 & 1839  & 2785  & 300   & 168   & 0.0173 & 0.6122
				& 307   & 130   & 0.0134 & 0.3425
				\\		
				5     & 2487  & 616   & 300   & 142   & 0.0205 & 2.3954 & 282   & 55    & 0.0079 & 1.3363 & 1827  & 2847  & 300   & 158   & 0.0164 & 0.6166 & 322   & 122   & 0.0126 &0.3335 \\
				\hline
		\end{tabular}}%
		\label{tab:BBTypeDcov}%
	\end{table}%

\end{document}